\newtheorem{theorem}{Theorem}[section]
\newtheorem{corollary}[theorem]{Corollary}
\newtheorem{prop}[theorem]{Proposition}
\newtheorem{lemma}[theorem]{Lemma}
\theoremstyle{remark}
\newtheorem{definition}[theorem]{Definition}
\newtheorem{example}[theorem]{Example}
\newtheorem{remark}[theorem]{Remark}
\renewcommand{\C}{\mathbb{C}}
\newcommand{\rE}{\mathsf{E}}
\newcommand{\E}{\mathsf{E}}
\newcommand{\EE}{\mathsf{D}}
\newcommand{\diag}{{\rm diag}}
\renewcommand{\G}{\Gamma}
\newcommand{\GL}{\operatorname{GL}}
\newcommand{\ol}{\overline}
\newcommand{\Q}{{\mathbb Q}}
\newcommand{\R}{\mathbb{R}}
\newcommand{\V}{\mathsf{V}}
\newcommand{\Z}{\mathbb{Z}}
\newcommand{\se}{\mathsf{e}}
\newcommand{\sx}{\mathsf{x}}
\newcommand{\A}{\mathsf{A}}
\newcommand{\sG}{\mathsf{G}}
\newcommand{\sZ}{\mathsf{Z}}
\newcommand{\K}{\mathcal{K}}
\newcommand{\new}[1]{{\textcolor{black}{\em #1}}}
\title{Graph coverings and twisted operators}
\author{David Cimasoni}
\address{David Cimasoni -- Universit\'e de Gen\`eve, Suisse}
\email{david.cimasoni@unige.ch}
\author{Adrien Kassel}
\address{Adrien Kassel -- CNRS, \'Ecole Normale Sup\'erieure de Lyon, France}
\email{adrien.kassel@ens-lyon.fr}
\date{\today}
\thanks{D.C. acknowledges partial support of the Swiss NSF  grant 200020-200400.
A.K. acknowledges the hospitality of the Universit\'{e} de Gen\`{e}ve and partial support from the ANR project DIMERS, grant number ANR-18-CE40-0033.}
\keywords{graph coverings, linear representation, determinantal partition functions, polynomial identities}
\subjclass{05C30, 05C50, 82B20}
\begin{document}

\begin{abstract}
Given a graph and a representation of its fundamental group,
there is a naturally associated twisted adjacency operator, uniquely defined up to conjugacy.
The main result of this article is the fact that this
operator behaves in a controlled way under graph covering maps.
When such an operator can be used to enumerate objects, or compute a partition function,
this has concrete implications on the corresponding enumeration problem, or statistical mechanics model.
For example, we show that if~$\widetilde{\G}$ is a finite covering graph of a connected graph~$\G$ endowed
with edge-weights~$\sx=\{\sx_\se\}_{\se}$, then the
spanning tree partition function of~$\G$ divides the one of~$\widetilde{\G}$ in the ring~$\Z[\sx]$.
Several other consequences are obtained, some known, others new.
\end{abstract}

\maketitle


\section{Introduction}

The aim of this article is to present a result of algebraic graph theory, probably known to the experts,
in a fairly self-contained and elementary manner.
This brings into what we believe to be the correct framework several well-known results in combinatorics,
statistical mechanics, and~$L$-function theory, but also provides new ones.
In order to preserve its non-technical nature, we focus in the present article on relatively direct consequences,
leaving the more elaborate implications to subsequent papers, see in particular~\cite{Cimasoni-Klein}.

\medskip

We now explain our main result in an informal way,
referring to Section~\ref{sec:background} for precise definitions
and background, to Theorem~\ref{thm:main} for the complete formal statement,
and to Section~\ref{sub:main} for its proof.

Given a locally finite weighted graph~$\G$ and a representation~$\rho$ of its fundamental group,
one can define a \new{twisted adjacency operator}~$\A_\G^\rho$, see Equation~\eqref{equ:operator},
which is well-defined up to conjugacy.
Consider a covering map~$\widetilde{\G}\to\G$ of finite degree between two connected locally finite graphs.
Via this map, the fundamental group~$\pi_1(\widetilde{\G})$ embeds into~$\pi_1(\G)$. As a consequence,
any representation~$\rho$ of~$\pi_1(\widetilde{\G})$ defines an
\new{induced representation}~$\rho^\#$ of~$\pi_1(\G)$.
Our main result is the fact that the operator~$\A^\rho_{\widetilde{\G}}$
is conjugate to~$\A_\G^{\rho^\#}$.

Let us mention that the existence of a natural isomorphism between the vector spaces on which
the twisted adjacency operators act can be
understood as a chain-complex version of the so-called \new{Eckmann-Shapiro lemma}, originally stated in
group cohomology (see Remark~\ref{rem:Shapiro}).
The interesting part of Theorem~\ref{thm:main}, which we have not been able to find in the literature,
is the fact that the explicitized natural isomorphism conjugates the aptly defined twisted adjacency operators.

As an immediate consequence of this result, we see that the decomposition of~$\rho^\#$ into irreducible representations
leads to a direct sum decomposition of~$\A_\G^{\rho^\#}$, and therefore of~$\A^\rho_{\widetilde{\G}}$.
For example, if~$\rho$ is taken to be the trivial representation,
we readily obtain the fact that~$\A_\G$ is a direct summand of~$\A_{\widetilde{\G}}$, see Corollary~\ref{cor:1}.
(Here, the absence of superscript means that these operators are not twisted,
or twisted by the trivial representation.)
Furthermore, if the covering is normal, then~$\A_{\widetilde{\G}}$ factors as
a direct sum of the operators~$\A_\G^\rho$ twisted by the irreducible representations of the Galois
group of the covering, see Corollary~\ref{cor:2}.

\medskip

Whenever~$\A^\rho_\G$ can be used to enumerate combinatorial objects in~$\G$, or in
an associated graph~$\sG$, these statements have very concrete combinatorial implications.
More generally, if these operators can be used to compute some partition functions of the weighted graph~$(\G,\sx)$,
or of an associated weighted graph~$(\sG,\sx)$, these results have often non-trivial
consequences on the corresponding
models. Several of these implications are well-known, but others are new.
We now state some of them, referring to Section~\ref{sec:app} for details.

There is an obvious place to start, namely the \new{matrix-tree theorem}: the Laplacian~$\Delta_\sG$ allows to
enumerate spanning trees (STs) and rooted spanning forests (RSFs) in~$\sG$.
More generally, if~$\sG=(\V,\E)$ is endowed
with edge-weights~$\sx=\{\sx_\se\}_{\se\in\E}$, then a weighted version of~$\Delta_\sG$ allows to compute the corresponding partition
functions~$\sZ_\mathit{ST}(\sG,\sx)$ and~$\sZ_\mathit{RSF}(\sG,\sx)$,
which can be thought of as elements of the polynomial ring~$\Z[\sx]=\Z[\{\sx_\se\}_{\se\in\E}]$.
Applying Corollary~\ref{cor:1} to the Laplacian~$\A_\G=\Delta_\sG$, we obtain the following result:
if~$\widetilde{\sG}$ is a finite covering graph of a finite connected graph~$\sG$ endowed with
edge-weights~$\sx$, and if~$\tilde{\sx}$ denotes these weights lifted to
the edges of~$\widetilde{\sG}$, then~$\sZ_\mathit{ST}(\sG,\sx)$
divides~$\sZ_\mathit{ST}(\widetilde{\sG},\tilde{\sx})$
and~$\sZ_\mathit{RSF}(\sG,\sx)$ divides~$\sZ_\mathit{RSF}(\widetilde{\sG},\tilde{\sx})$ in the
ring~$\Z[\sx]$.
As an immediate consequence, the number of spanning trees in~$\sG$ divides the number of spanning
trees in~$\widetilde{\sG}$ (a fact first proved by Berman~\cite{Berman} using a different method),
and similarly for rooted spanning forests (to the best of our knowledge, a new result).

Another interesting class of operators is given by the weighted skew-adjacency matrices defined by
Kasteleyn~\cite{Ka1,Ka2} in his study of the \new{dimer model} on surface graphs.
For this model, Corollary~\ref{cor:1} can only be applied to cyclic coverings,
yielding a known result~\cite{Joc,Kuperberg}.
Applying Corollary~\ref{cor:2} to the case of a graph embedded in the torus yields an immediate proof
of the classical fact that the dimer characteristic polynomial behaves multiplicatively under so-called
\new{enlargement of the fundamental domain}~\cite[Theorem~3.3]{KOS}.
However, applying our results to the study of the dimer model on graphs embedded in the Klein bottle
leads to new powerful results, that are harnessed in the parallel article~\cite{Cimasoni-Klein}.

Let us finally mention that our main result can be interpreted as the fact that the operators~$\A_\G^\rho$
satisfy the so-called \new{Artin formalism}, a set of axioms originating from the study of~$L$-series of
Galois field extensions~\cite{Artin24,Artin30}. As a consequence, we obtain several results on
the associated~$L$-series~$L(\G,\sx,\rho)=\det(\operatorname{I}-\A_\G^\rho)^{-1}$, providing a wide generalization
of the results of Stark and Terras~\cite{S-TI,S-TII}, see Section~\ref{sub:Artin}.

\medskip

We conclude this introduction with one final remark.
There are two ways to consider graphs: either as combinatorial objects, or as topological ones (namely~$1$-dimensional CW-complexes). Hence, there are two corresponding ways to define and study the associated fundamental
groups and covering maps. In our pursuit of simplicity, we have chosen the combinatorial one. As a result, we
provide the reader with a brief and dry but self-contained treatment of the required parts of algebraic
topology translated from the topological to the combinatorial category,
see Sections~\ref{sub:graph}--\ref{sub:covering}.

\medskip

This paper is organised as follows. Section~\ref{sec:background} deals with the necessary background material and
claims no originality: we start from scratch with graphs, their fundamental groups and covering maps, before
moving on to connections on graphs, and basics of representation theory of groups.
Section~\ref{sec:main} contains the definition of the twisted operators, our main result with its proof and
corollaries, together with the analogy with algebraic number theory via the Artin formalism.
Finally, Section~\ref{sec:app} deals with the aforementioned combinatorial applications.


\section{Background on graphs and representations}
\label{sec:background}

In this section, we first recall the combinatorial definitions of the
fundamental group of a graph and of graph coverings, adapting
the standard references~\cite{SerreTrees} and~\cite{Hatcher} to our purposes,
see also~\cite{K-N}.
We then proceed with connections on graph vector bundles following~\cite{Kenyon},
and linear representations of groups following~\cite{Serre}.

\subsection{Graphs and directed graphs}
\label{sub:graph}

This first paragraph deals with the elementary concepts of graph and directed graph.
Since there is no universal agreement on the relevant terminology and notation, we
record here these formal definitions following~\cite{SerreTrees}.

\begin{definition}\label{defgraph}
A \new{directed graph} (or \new{digraph})~$\Gamma$ consists of a set~$\V$ of \new{vertices},
a set~$\EE$ of \new{(directed) edges}, together with
maps~$s,t\colon\EE\to\V$ assigning to each edge~$e\in\EE$ its \new{source} vertex~$s(e)\in\V$ and its \new{target} vertex~$t(e)\in\V$.

A \new{graph}~$\Gamma$ consists of sets~$\V,\EE$ and
maps~$s,t\colon\EE\to\V$ as above,
together with an involution of~$\EE$ 
assigning to each
edge~$e\in\EE$ its \new{inverse}~$\ol{e}\in\EE$ such that~$\ol{e}\neq e$ and~$s(\ol{e})=t(e)$.
We let~$\E=\EE/(e\sim\ol{e})$ denote the set of \new{unoriented edges}, and write~$\se\in\E$ for the unoriented
edge corresponding to~$e,\ol{e}\in\EE$.

A (directed) graph is \new{locally finite} if for all~$v\in\V$, the
sets~$\EE_v=\{e\in\EE\,|\,s(e)=v\}$ and~$\EE^v=\{e\in\EE\,|\,t(e)=v\}$ are finite.
It is called \new{finite} if both sets~$\V$ and~$\EE$ are finite. 
\end{definition}

Note that these graphs are not simple in general: we allow multiple edges as well as \emph{self-loops}, i.e. edges~$e$ with~$s(e)=t(e)$.
Note also that in this formalism, graphs are special types of directed graphs. Moreover,
given a directed graph~$\G$, one can build an associated graph (still denoted by~$\G$) by
formally adding an inverse~$\ol{e}$ to each edge~$e\in\EE$.

\medskip

Let us fix a directed graph~$\Gamma$.
A \new{path of length~$n\ge 1$} is a sequence~$\gamma=(e_1,e_2,\ldots, e_n)$ of edges such that~$t(e_i)=s(e_{i+1})$ for all~$i\in\{1,\ldots, n-1\}$.
We shall write~$s(\gamma)=s(e_1)$ and~$t(\gamma)=t(e_n)$ for the source and target of~$\gamma$, respectively.
A \new{path of length~$0$}, or \new{constant path}~$\gamma$, is given by a vertex, which is both
the source and target of~$\gamma$.
A \new{loop} (based at~$v$) is a path~$\gamma$ with~$s(\gamma)=t(\gamma)=v$.

The directed graph~$\Gamma$ is said to be \new{connected} if for any~$v,w\in\V$, there is a path~$\gamma$ with~$s(\gamma)=~v$ and~$t(\gamma)=w$.

\subsection{The fundamental group of a graph}
\label{sub:fundamental}

Let us now assume that~$\Gamma$ is a graph, and fix a vertex~$v\in\V$.

Note that the set of loops based at~$v$ is a monoid
with respect to the concatenation of paths, with
neutral element~$1$ given by the constant path based at~$v$. 
Let us call two loops based at~$v$ (or more generally, two paths with same source and same target)
\new{homotopic} if one can be obtained from the other
by removing or adding loops of the form~$(e,\ol{e})$ along the path.
Then, the set of homotopy classes of loops based at~$v$ forms a group,
with the inverse of~$\gamma=(e_1,\dots,e_n)$ given by~$\ol{\gamma}=(\ol{e}_n,\dots,\ol{e}_1)$.

\begin{definition}\label{deffund}
This group is the \new{fundamental group of the graph~$\Gamma$ based at~$v$}, and is denoted
by~$\pi_1(\Gamma,v)$. 
\end{definition}

If~$\Gamma$ is connected, then the isomorphism class of~$\pi_1(\Gamma,v)$ is easily seen not to depend on
the base vertex~$v$.

By a slight abuse of terminology, we define the \new{fundamental group of a directed graph~$\G$}
as the fundamental group of the associated graph obtained by adding an inverse to each edge of~$\G$.

\medskip

We will make use of the alternative definition of the fundamental group, based on a spanning tree.
Recall that a \new{circuit} (of length~$n\ge 1$) is a loop~$\gamma=(e_1,\dots,e_n)$
such that~$e_{i+1}\neq\ol{e}_i$ for~$i\in\{1,\dots,n-1\}$,~$e_1\neq\ol{e}_n$, and such that the vertices~$t(e_1),\dots,t(e_n)$ are all distinct. A \new{spanning tree} of~$\G$ is a connected non-empty subgraph~$T\subset\G$
without circuits, such that the vertices of~$T$ coincide with the vertices of~$\G$.
Note that the number of vertices and edges in a finite tree satisfy~$|\V(T)|-|\E(T)|=1$.

The \new{fundamental group of the graph~$\Gamma$ based at~$T$}, denoted by~$\pi_1(\G,T)$, is defined as the quotient
of the free group over~$\EE$ by the relations~$\ol{e}=e^{-1}$ for all edges of~$\G$, and~$e=1$ for all edges of~$T$.
If~$\G$ is connected, then it admits a spanning tree, and the groups~$\pi_1(\Gamma,v)$ and~$\pi_1(\G,T)$ are
easily seen to be isomorphic for all~$v\in\V$ and all spanning trees~$T$ of~$\G$.
As a consequence, if~$\G$ is connected and finite, its fundamental group is free
of rank~$\vert\rE\vert-\vert\V\vert+1$.

\subsection{Covering graphs}
\label{sub:covering}

A \new{morphism of digraphs}~$p$ from~$\widetilde{\G}=(\widetilde{\V},\widetilde{\EE},\tilde{s},\tilde{t})$
to~$\G=(\V,\EE,s,t)$ consists of two maps~$p_0:\widetilde{\V}\to\V$ and~$p_1:\widetilde{\EE}\to\EE$ such
that~$s\circ p_1=p_0\circ\tilde{s}$ and~$t\circ p_1=p_0\circ\tilde{t}$.
A \new{morphism of graphs}~$p\colon\widetilde{\G}\to\G$ is a morphism of digraphs which also satisfies the
equality~$p_1(\ol{e})=\ol{p_1(e)}$ for all~$e\in\widetilde{\EE}$.

As one easily checks, a morphism of graphs~$p\colon\widetilde{\G}\to\G$ induces in the obvious way a
homomorphism of groups~$p_*\colon\pi_1(\widetilde{\G},\tilde{v})\to\pi_1(\G,p(\tilde{v}))$ .

\begin{definition}
\label{defcovering}
A \new{covering map} is a morphism of directed graphs~$p\colon\widetilde{\G}\to\G$ with~$p_0\colon\widetilde{\V}\to\V$
surjective, such that for all~$\tilde{v}\in\widetilde{\V}$, the restriction of~$p_1$ defines
bijections~$\widetilde{\EE}_{\tilde{v}}\to \EE_{p(\tilde{v})}$ and~$\widetilde{\EE}^{\tilde{v}}\to \EE^{p(\tilde{v})}$.
In that case, $\widetilde{\G}$ is called a \new{covering digraph} of~$\G$.
\end{definition}

If~$\G$ is a connected digraph and~$p$ is a covering map, then the fibers~$p_0^{-1}(v)$ and~$p_1^{-1}(e)$
have the same cardinality for all $v\in\V$ and $e\in\EE$. This
is called the \new{degree} of the covering.
From now on, we will drop the subscripts in~$p_0$ and~$p_1$ and denote both maps by~$p$.

Note that any morphism of digraphs~$p\colon\widetilde{\G}\to\G$ extends to a unique morphism between the associated graphs
(obtained by adding an inverse to each edge).
Moreover, if the morphism of digraphs is a covering map, then so is the associated morphism of graphs.
In such a case, the graph~$\widetilde{\G}$ is called a \new{covering graph} of~$\G$.

\medskip

The following \new{path lifting property} is a direct consequence of the definitions,
but nevertheless a fundamental feature of a covering map~$p\colon\widetilde{\G}\to\G$.
Given any
path~$\gamma$ in~$\G$ with~$s(\gamma)=v_0$ and any~$\tilde{v}_0\in p^{-1}(v_0)$,
there is a unique path~$\tilde{\gamma}$ in~$\widetilde{\G}$ with~$p(\tilde{\gamma})=\gamma$
and~$s(\tilde{\gamma})=\tilde{v}_0$.
Furthermore, for~$\gamma$ a loop,
the formula~$[\gamma]\cdot\tilde{v}_0=t(\tilde{\gamma})$ defines an action
of~$\pi_1(\G,v_0)$ on~$p^{-1}(v_0)$. If~$\widetilde{\G}$ is connected, then
this action is easily seen to be transitive, with
isotropy group of~$\tilde{v}_0$ equal to~$p_*(\pi_1(\widetilde{\G},\tilde{v}_0))$.
As a consequence, the degree of the covering coincides with
the index of~$p_*(\pi_1(\widetilde{\G},\tilde{v}_0))$ in~$\pi_1(\G,v_0)$.

\medskip

The easy proof of the following lemma is left to the reader.

\begin{lemma}
\label{lemma:covering}
If~$p\colon\widetilde{\G}\to\G$ is a covering map, then:
\begin{enumerate}[(i)]
\item the homomorphism~$p_*\colon\pi_1(\widetilde{\G},\tilde{v})\to\pi_1(\G,p(\tilde{v}))$ is injective;
\item for any~$e\in\EE$, we have the equalities
\[
\{\tilde{t}(\tilde{e})\in\widetilde{\V}\,|\,\tilde{e}\in p^{-1}(e)\}=p^{-1}(t(e))\quad\text{and}\quad
\{\tilde{s}(\tilde{e})\in\widetilde{\V}\,|\,\tilde{e}\in p^{-1}(e)\}=p^{-1}(s(e))\,.
\]
\end{enumerate}
\end{lemma}
 
Let us finally recall that a covering map~$p\colon\widetilde{\G}\to\G$ is said to be \new{normal} (or \new{regular})
if~$p_*(\pi_1(\widetilde{\G},v))$ is a normal subgroup of~$\pi_1(\G,p(v))$. In such a case,
we denote the quotient group by~$G(\widetilde{\G}/\G)$. This is nothing but the group of covering transformations
of this covering map, usually referred to as the \new{Galois group}.

\subsection{Connections on graphs}
\label{sub:bundle}

Following~\cite[Section~3.1]{Kenyon}, let us fix a \new{vector bundle} on a graph~$\G$, i.e.
a finite-dimensional complex vector space~$W$ and the choice of a vector space~$W_v$ isomorphic to~$W$ for each~$v\in\V$.
Such a vector bundle can be identified with~$W_\G\coloneqq \bigoplus_{v\in\V}W_v\simeq W^\V$.

\begin{definition}
\label{def:connection}
A \new{connection} on a vector bundle~$W_\G$ is the choice~$\Phi=(\varphi_e)_{e\in\EE}$ of an
isomorphism~$\varphi_e\colon W_{t(e)}\to W_{s(e)}$ for each~$e\in\EE$, such
that~$\varphi_{\ol{e}}=\varphi_e^{-1}$ for all~$e\in\EE$.

Two connections~$\Phi=(\varphi_e)_{e\in\EE}$ and~$\Phi'=(\varphi'_e)_{e\in\EE}$ are said to be
\new{gauge-equivalent} if there is a family of automorphisms~$\{\psi_v\colon W_v\to W_v\}_{v\in\V}$
such that~$\psi_{s(e)}\circ\varphi_e=\varphi'_e\circ\psi_{t(e)}$ for all~$e\in\EE$.
\end{definition}

Let us fix a base vertex~$v_0\in\V$, a connection~$\Phi$ on a vector bundle~$W_\G$,
and let us denote~$W_{v_0}$ simply by~$W$.
Any loop~$\gamma=(e_1,\dots,e_n)$ based at~$v_0$ gives an
automorphism~$\varphi_{e_1}\circ\dots\circ\varphi_{e_n}$ of~$W$ called the \new{monodromy} of~$\gamma$.
This construction defines a homomorphism
\[
\rho^\Phi\colon\pi_1(\G,v_0)\longrightarrow\operatorname{GL}(W)\,,
\]
i.e. a representation of the fundamental group of~$\G$ in~$W$.

Any representation~$\rho\colon\pi_1(\G,v_0)\to\operatorname{GL}(W)$ is of the form~$\rho^\Phi$ for some connection~$\Phi$: indeed, one can fix a spanning tree~$T\subset\G$ (recall that~$\pi_1(\G,v_0)\simeq\pi_1(\G,T)$),
set~$\varphi_e=\operatorname{id}_W$ for each edge of~$T$ and~$\varphi_e=\rho_e$ for each of the remaining edges of~$\G$.
Furthermore, given two connections~$\Phi$ and~$\Phi'$ on~$W_\G$,
one easily checks that~$\rho^\Phi$ and~$\rho^{\Phi'}$ are conjugate representations
if and only if~$\Phi$ and~$\Phi'$ are gauge-equivalent connections.

In other words, the\new{~$\operatorname{GL}(W)$-character variety}
of~$\pi_1(\G,v_0)$, i.e. the set of conjugacy classes of homomorphisms~$\pi_1(\G,v_0)\to\operatorname{GL}(W)$,
is given by the set of connections on~$W_\G$ up to gauge-equivalence.

\begin{remark}
\label{rem:connection}
The definition of a connection as a family of isomorphisms~$\varphi_e\colon W_{s(e)}\to W_{t(e)}$ seems more natural, but
leads to antihomomorphisms of~$\pi_1(\G,v_0)$. On the other hand, our convention yields
homomorphisms, and is coherent with the definition of a local coefficient system for twisted homology,
see e.g.~\cite[p.~255]{Whi}.
\end{remark}

\subsection{Linear representations of groups}
\label{sub:induced}

We now recall the necessary notation and terminology of linear representations of groups, following~\cite{Serre}.
Throughout this subsection,~$G$ denotes a group.

\medskip

Let us first recall that the \new{degree} of a
representation~$\rho\colon G\to\operatorname{GL}(W)$, denoted by~$\deg(\rho)$, is defined as the dimension of~$W$,
which we always assume to be finite.
The only representation of degree~$0$ is written~$\rho=0$, while the degree~$1$ representation
sending all elements of~$G$ to~$1\in\C^*=\GL(\C)$ is denoted by~$\rho=1$.

Let us now fix two linear representations~$\rho\colon G\to\operatorname{GL}(W)$
and~$\rho'\colon G\to\operatorname{GL}(W')$. The \new{direct sum} of~$\rho$ and~$\rho'$
is the representation~$\rho\oplus\rho'\colon G\to\operatorname{GL}(W\oplus W')$
given by~$(\rho\oplus\rho')_g=\rho_g\oplus\rho_{g'}$.
A representation of~$G$ is said to be \new{irreducible} if it is not the direct sum
of two representations that are both not~$0$.

\medskip

Now, fix a subgroup~$H<G$ of finite index, and a representation~$\rho\colon H\to\operatorname{GL}(W)$.
There is a representation~$\rho^\#\colon G\to\operatorname{GL}(Z)$
which is uniquely determined up to isomorphism by the following two properties.
Let~$R\subset G$ denote a \new{set of representatives} of~$G/H$, i.e. each~$g\in G$ can be written uniquely as~$g=rh\in G$ with~$r\in R$ and~$h\in H$.
\begin{enumerate}[(i)]
\item We have~$Z=\bigoplus_{r\in R}\rho_r^\#(W)$; in particular, the space~$W$ is a subspace of~$Z$.
\item For any~$h\in H$ and~$w\in W$, we have~$\rho_h^\#(w)=\rho_h(w)$.
\end{enumerate}
Indeed, the first property ensures that any element of~$Z$ can be written uniquely as~$\sum_{r\in R}\rho_r^\#(w_r)$
with~$w_r\in W$, while the second one implies that for any~$g\in G, r\in R$ and~$w\in W$, we have~$\rho_g^\#(\rho_r^\#(w))=\rho_{r'}^\#(\rho_h(w))$ where~$gr=r'h\in G$ with~$r'\in R$ and~$h\in H$.

This representation~$\rho^\#\colon G\to\operatorname{GL}(Z)$ is said to be \new{induced} by~$\rho\colon H\to\operatorname{GL}(W)$. 

\begin{example}
\label{ex:ind}
Let us fix a finite index subgroup~$H<G$ and consider the trivial representation~$\rho=1$ of~$H$.
By definition, the induced representation~$\rho^\#\colon G\to\GL(Z)$ is given by the action by left multiplication
of~$G$ on the vector space~$Z$ with basis~$G/H$.
Since~$G$ acts by permutation on the set~$G/H$, which is finite, the subspace
of~$Z$ generated by the sum of these basis elements is fixed by this action.
Therefore, the induced representation splits as~$\rho^\#=1\oplus\rho'$ for some
representation~$\rho'$ of~$G$.

Let us assume further that~$H$ is a normal subgroup of~$G$. In such a case,
the induced representation can be written as~$\rho^\#=\rho_\mathrm{reg}\circ\mathrm{pr}$,
with~$\mathrm{pr}\colon G\to G/H$ the canonical projection and~$\rho_\mathrm{reg}$ the so-called \emph{regular
representation} of~$G/H$.
Since this group is finite, this representation splits as
\[
\rho_\mathrm{reg}=\bigoplus_{\rho\text{ irred.}}\rho^{\oplus\deg(\rho)}\,,
\]
the sum being over all irreducible representations of~$G$ (see~\cite[Section~2.4]{Serre}).
\end{example}


\section{Twisted operators on graph coverings}
\label{sec:main}

This section contains the proof of our main result, Theorem~\ref{thm:main}, which
relates twisted adjacency operators on directed graphs connected by a covering map.
We start in Section~\ref{sub:operator} by defining the relevant twisted operators,
while Section~\ref{sub:main} deals with Theorem~\ref{thm:main}, its proof, and a couple of
corollaries.
Finally, Section~\ref{sub:Artin} shows how this result can be interpreted as
a combinatorial version of the Artin formalism for these operators, yielding consequences on associated~$L$-series.

\subsection{Twisted weighted adjacency operators}
\label{sub:operator}

Fix a locally finite directed graph~$\G=(\V,\EE,s,t)$. Let us assume that it is endowed with
\new{edge-weights}, i.e. a collection~$x=\{x_e\}_{e\in\EE}$ of complex numbers attached to the edges.
The associated \new{weighted adjacency operator}~$\A_\G$ acts on~$\C^\V$ via
\[
(\A_\G f)(v)=\sum_{e\in\EE_v}x_e\,f(t(e))\quad\text{for all~$f\in \C^\V$ and~$v\in\V$}\,.
\]

Adapting~\cite[Section~3.2]{Kenyon} to our purposes, this operator can be twisted by a
representation~$\rho\colon\pi_1(\G,v_0)\to\operatorname{GL}(W)$
in the following way. Fix a vector bundle~$W_\G\simeq W^\V$ and a connection~$\Phi=(\varphi_e)_{e\in\EE}$
representing~$\rho$.

\begin{definition}
\label{def:operator}
The associated \new{twisted weighted adjacency operator}~$\A_\G^\rho$ is the operator on~$W^\V$ given by
\begin{equation}
\label{equ:operator}
(\A^\rho_\G f)(v)=\sum_{e\in\EE_v}x_e\,\varphi_e(f(t(e)))\quad\text{for all~$f\in W^\V$ and~$v\in\V$}\,.
\end{equation}
\end{definition}

Several remarks are in order.

\begin{remark}
\label{rem:twisted}
\begin{enumerate}[(i)]
\item We make a slight abuse of notation in the sense that the
operator~$\A_\G^\rho$ depends on the choice of a connection~$\Phi$ and is
therefore not entirely determined by~$\rho$.
However, by
Section~\ref{sub:bundle}, conjugate representations are given by gauge equivalent connections.
Furthermore, the corresponding twisted operators are conjugated by an element of~$\GL(W)^\V\subset\GL(W^\V)$.
Therefore, the conjugacy class of~$\A_\G^\rho$ only depends on the conjugacy class of~$\rho$.
\item If a representation~$\rho$ is given by the direct sum of~$\rho_1$ and~$\rho_2$, then the
operator~$\A_\G^\rho$ is conjugate to~$\A_\G^{\rho_1}\oplus \A_\G^{\rho_2}$.
\item The operator~$\A^1_\G$ is nothing but the untwisted operator~$\A_\G$.
\end{enumerate}
\end{remark}

\medskip

Obviously, any given directed graph~$\G$ defines a single untwisted operator~$\A_\G$,
so it might seem at first sight that the applications of our results will be rather limited.
However, there are many natural assignments~$\sG\mapsto\G$ mapping a directed graph~$\sG$ to
another directed graph~$\G$ so that~$\A_\G$ provides a new operator on~$\sG$.
Moreover, if there is a natural homomorphism~$\alpha\colon\pi_1(\G)\to\pi_1(\sG)$, then
a~$\rho$-twisted version of this new operator can be understood as~$\A_\G^{\rho\circ\alpha}$.
Finally, if the assignment~$\sG\mapsto\G$ preserves covering maps, then our results apply to these new
twisted operators as well.

We now give three explicit examples of such natural maps~$\sG\mapsto\G$, claiming no exhaustivity.
It is easy indeed to find additional interesting ones, e.g. the \new{Fisher correspondance} used
in the study of the Ising model~\cite{Fisher}.

\begin{example}
\label{ex:Delta}
Let~$\mathsf{G}=(\V(\sG),\E(\sG))$ be a graph
endowed with \new{symmetric} edge-weights,
i.e. labels~$\sx=(\sx_\se)_{\se\in\E(\sG)}$ associated to its unoriented edges.
Consider the associated graph~$\G=(\V,\E)$ defined by~$\V=\V(\sG)$ and~$\E=\E(\sG)\cup\V(\sG)$,
where each~$v\in\V(\sG)$ produces an unoriented edge~$\{v,\ol{v}\}$ and the
source, target, and involution maps of~$\G$ are given by extending the ones of~$\sG$
via~$s(v)=t(v)=v$ for all~$v\in\V$.
(Concretely, the graph~$\G$ is obtained from~$\sG$ by adding a self-loop at each vertex.)
Also, extend the edge-weights on~$\sG$ to symmetric edge-weights on~$\G$
via~$\sx_v=-\sum_{e\in\EE(\sG)_v}\sx_e$. Then, the corresponding weighted adjacency operator~$\A_\G$
is (the opposite of) the \new{Laplacian}~$\Delta_\sG$ on~$\sG$.
It can be used to count spanning trees of~$\sG$ --- this is the celebrated \new{matrix-tree theorem} ---
but also rooted spanning forests, see Section~\ref{sub:tree}.

Note that there is a natural homomorphism~$\alpha\colon\pi_1(\G,v_0)\to\pi_1(\sG,v_0)$
mapping all the newly introduced self-loops to the neutral element. Given any representation~$\rho$ of~$\pi_1(\sG,v_0)$,
the associated twisted operator~$\A^{\rho\circ\alpha}_\G$
is the \new{vector bundle Laplacian}~$\Delta^\rho_\sG$ of~\cite{Kenyon}.
When the representation~$\rho$ takes values in~$\C^*$ or~$\operatorname{SL}_2(\C)$, then~$\Delta^\rho_\sG$
can be used to study cycle-rooted spanning forests~\cite{Forman,Kenyon},
while representations of higher degree yield more involved combinatorial objects.
\end{example}

\begin{example}
\label{ex:Kasteleyn}
Let~$\G$ be a graph endowed with symmetric edge-weights~$\sx=(\sx_\se)_{\se\in\E}$.
Fix an orientation of the edges of~$\G$ and consider the same graph~$\G$ endowed with the anti-symmetric
edge-weights~$x=\{x_e\}_{e\in\EE}$ given by~$x_e=\sx_\se$ if the orientation of~$e\in\EE$ agrees with
the fixed orientation, and~$x_e=-\sx_\se$ otherwise. Then, the operator~$\A_\G$ is a weighted skew-adjacency
operator that was used by Kasteleyn~\cite{Ka1,Ka2} and many others in the study of the 2-dimensional
dimer and Ising models,
see Section~\ref{sub:dimers}.
Such operators twisted by~$\operatorname{SL}_2(\C)$-representations
are also considered by Kenyon in his study of the double-dimer model~\cite{Kenyon:dd}. 
\end{example}

\begin{example}
\label{ex:S-T}
Let us start with a graph~$\mathsf{G}=(\V(\sG),\EE(\sG),s_\sG,t_\sG,i)$ endowed with symmetric edge-weights~$\sx=\{\sx_\se\}_{\se}$, and consider the
associated \new{directed line graph}~$\G=(\V,\EE,s,t)$ defined by
\[
\V=\EE(\sG),\quad\EE=\{(e,e')\in\V\times\V\,|\,t_\sG(e)=s_\sG(e')\text{ but }e'\neq\ol{e}\},\quad s(e,e')=e,\quad t(e,e')=e',
\]
and endowed with the edge-weights~$x=\{x_{e,e'}\}_{(e,e')\in\EE}$ defined by~$x_{e,e'}=\sx_\se$.
Then, the operator~$\operatorname{I}-\A_\G$
is considered by Stark and Terras~\cite{S-TI,S-TII} in their study of prime cycles (see Section~\ref{sub:Artin}),
while a similar operator is defined by Kac and Ward~\cite{K-W} in their exploration of the planar Ising model
(see Section~\ref{sub:dimers}).
Note also that there is a natural homomomorphism~$\alpha\colon\pi_1(\G,e_0)\to\pi_1(\sG,s(e_0))$, so any
representation~$\rho\colon\pi_1(\sG,v_0)\to\GL(W)$ defines a twisted
operator~$\operatorname{I}-\A^{\rho\circ\alpha}_\G$.
\end{example}

\subsection{The main result}
\label{sub:main}

We are finally ready to state and prove our main theorem.

Let~$\G=(\V,\EE,s,t)$ be a locally finite connected directed graph
with weights~$x=(x_e)_{e\in\EE}$,
and let~$p\colon\widetilde{\G}\to\G$ be a covering map of finite degree~$d$,
with~$\widetilde{\G}=(\widetilde{\V},\widetilde{\EE},\tilde{s},\tilde{t})$ connected.
The weights~$x$ on~$\G$ lift to weights~$\tilde{x}$ on~$\widetilde{\G}$ via~$\tilde{x}_{\tilde{e}}\coloneqq x_{p(\tilde{e})}$
for all~$\tilde{e}\in\widetilde{\EE}$, so~$\widetilde{\G}$ is a
weighted directed graph, which is locally finite.

Fix base vertices~$v_0\in\V$ and~$\tilde{v}_0\in p^{-1}(v_0)$,
and recall from Lemma~\ref{lemma:covering} that~$p$ induces an
injection~$p_*\colon\pi_1(\widetilde{\G},\tilde{v}_0)\to\pi_1(\G,v_0)$ between the fundamental groups of the associated
graphs, so~$\pi_1(\widetilde{\G},\tilde{v}_0)$ can
be considered as a subgroup of~$\pi_1(\G,v_0)$ of index~$d$.
Therefore, as explained in
Section~\ref{sub:induced},
any representation~$\rho\colon\pi_1(\widetilde{\G},\tilde{v}_0)\to\GL(W)$ induces a
representation ~$\rho^\#\colon\pi_1(\G,v_0)\to\GL(Z)$.

\begin{theorem}
\label{thm:main}
For any covering map of connected directed graphs~$p\colon\widetilde{\G}\to\G$ as above and any
representation~$\rho$ of~$\pi_1(\widetilde{\G},\tilde{v}_0)$,
the operators~$\A_{\widetilde{\G}}^\rho$ and~$\A_\G^{\rho^\#}$ are conjugate.
\end{theorem}

\begin{remark}
\label{rem:Shapiro}
The existence of a natural isomorphism~$W^{\widetilde{\V}}\simeq Z^\V$ is a chain-complex version
of the \new{Eckmann-Shapiro Lemma}, traditionally stated in the context of group (co)homology
(see e.g.~\cite[p.~73]{Brown}).
Moreover, the tensor-product definition of the induced representation (see~\cite[Chapter~7]{Serre}) makes
the existence of this isomorphism a routine check.
The interesting part of Theorem~\ref{thm:main} is the explicit form of this natural isomorphism
in our setting, which turns out to conjugate the relevant twisted adjacency operators.
\end{remark}

Before giving the proof of Theorem~\ref{thm:main}, we present a couple of consequences.

\begin{corollary}
\label{cor:1}
If~$\widetilde{\G}$ is a connected covering digraph of~$\G$ of finite degree,
then~$\A_{\widetilde{\G}}$ is conjugate to~$\A_\G\oplus\A_\G^{\rho'}$ for some
representation~$\rho'$ of~$\pi_1(\G,v_0)$.
\end{corollary}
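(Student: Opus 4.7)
The plan is to invoke Theorem~\ref{thm:main} with $\rho$ taken to be the trivial representation $1$ of $\pi_1(\widetilde{\G},\tilde{v}_0)$ on $W=\C$, and then to split off a trivial summand from the induced representation $1^\#$ on $Z$. By Remark~\ref{rem:twisted}(iii) we have $\A_{\widetilde{\G}}^1=\A_{\widetilde{\G}}$, so Theorem~\ref{thm:main} provides an isomorphism conjugating $\A_{\widetilde{\G}}$ with $\A_\G^{1^\#}$. It therefore suffices to exhibit a decomposition $1^\#\cong 1\oplus\rho'$ of representations of $\pi_1(\G,v_0)$, since then Remark~\ref{rem:twisted}(ii) yields $\A_\G^{1^\#}$ conjugate to $\A_\G\oplus\A_\G^{\rho'}$.

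To obtain such a decomposition, I would unpack the defining properties of $1^\#$ from Section~\ref{sub:induced}. Setting $z_r:=\rho^\#_r(1)$ for $r\in R$, a set of representatives of $\pi_1(\G,v_0)/\pi_1(\widetilde{\G},\tilde{v}_0)$, condition~(i) says $Z=\bigoplus_{r\in R}\C z_r$, while condition~(ii) combined with $\rho\equiv 1$ shows that $\rho^\#_g$ simply permutes the basis $\{z_r\}_{r\in R}$ according to the left multiplication action of $\pi_1(\G,v_0)$ on the coset space. Hence $z_0:=\sum_{r\in R}z_r$ is fixed by every $\rho^\#_g$, so the line $\C z_0\subset Z$ is a subrepresentation isomorphic to $1$.

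It remains to produce an invariant complement of $\C z_0$, and this is the step requiring a modest amount of care even though the group $\pi_1(\G,v_0)$ is infinite in general. The point is that the image of $\rho^\#$ in $\GL(Z)$ is contained in the finite group of permutations of the basis $\{z_r\}_{r\in R}$, hence is finite. Averaging the standard inner product on $Z$ (for which $\{z_r\}$ is orthonormal) over this finite image yields a $\pi_1(\G,v_0)$-invariant Hermitian form, and the orthogonal complement $Z':=(\C z_0)^\perp$ with respect to this form is then a $\pi_1(\G,v_0)$-stable complement. Taking $\rho':=1^\#|_{Z'}$, we obtain $1^\#\cong 1\oplus\rho'$, and chaining the conjugacies above delivers the corollary. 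The main conceptual step is the identification of $1^\#$ with the permutation representation on the coset space and the observation that this representation always contains $1$; everything else amounts to citing Theorem~\ref{thm:main} and Remark~\ref{rem:twisted}.
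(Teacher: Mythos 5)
Your proof is correct and follows essentially the same route as the paper: apply Theorem~\ref{thm:main} with the trivial representation, identify $1^\#$ with the permutation representation on the coset space, split off the trivial summand spanned by the sum of the basis vectors, and conclude via Remark~\ref{rem:twisted}(ii)--(iii). The only difference is that you justify the existence of an invariant complement (by averaging an inner product over the finite image), a point the paper leaves implicit; one could also just take the invariant hyperplane of coordinate sums equal to zero.
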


\begin{proof}
Applying Theorem~\ref{thm:main} to the trivial representation~$\rho=1$ of~$\pi_1(\widetilde{\G},\tilde{v}_0)$,
we get that~$\A^\rho_{\widetilde{\G}}=\A_{\widetilde{\G}}$ is conjugate to~$\A_\G^{\rho^\#}$,
with~$\rho^\#$ the induced representation of~$\pi_1(\G,v_0)$.
By the first part of Example~\ref{ex:ind}, it splits as~$\rho^\#=1\oplus\rho'$ for some
representation~$\rho'$ of~$\pi_1(\G,v_0)$. The statement now follows from
the second and third points of Remark~\ref{rem:twisted}.
\end{proof}

\begin{corollary}
\label{cor:2}
If~$\widetilde{\G}\to\G$ is a normal covering map of finite degree with~$\widetilde{\G}$ connected,
then~$\A_{\widetilde{\G}}$ is conjugate to
\[
\bigoplus_{\rho\text{ irred.}}\left(\A_\G^{\rho\circ\mathrm{pr}}\right)^{\oplus\deg(\rho)}\,,
\]
where the direct sum is over all irreducible representations of~$G(\widetilde{\G}/\G)$,
and~$\mathrm{pr}$ stands for the canonical projection of~$\pi_1(\G,v_0)$ onto~$\pi_1(\G,v_0)/p_*(\pi_1(\widetilde{\G},\tilde{v}_0))=G(\widetilde{\G}/\G)$.
\end{corollary}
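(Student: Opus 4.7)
The plan is to proceed exactly as in Corollary~\ref{cor:1}: apply Theorem~\ref{thm:main} to the trivial representation~$\rho=1$ of~$H:=\pi_1(\widetilde{\G},\tilde{v}_0)$ to obtain that~$\A_{\widetilde{\G}}=\A_{\widetilde{\G}}^1$ is conjugate to~$\A_\G^{1^\#}$, where~$1^\#$ is the induced representation of~$G:=\pi_1(\G,v_0)$. The task then reduces to identifying~$1^\#$ explicitly, under the extra hypothesis of normality, and decomposing it into irreducibles.

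First I would identify~$1^\#\colon G\to\GL(Z)$ directly from the definition in Section~\ref{sub:induced}. Picking a set of representatives~$R\subset G$ of~$G/H$, the space~$Z$ has a distinguished basis~$\{e_r\}_{r\in R}$ and the action is determined by~$g\cdot e_r=e_{r'}$ where~$gr=r'h$ with~$r'\in R$,~$h\in H$. Since~$H$ is normal, $R$ can be identified with the quotient group~$Q:=G/H=G(\widetilde{\G}/\G)$ in such a way that~$r'$ coincides with the product~$\mathrm{pr}(g)\cdot\mathrm{pr}(r)$ in~$Q$; in particular, $r'$ depends only on the coset of~$g$. This exhibits~$1^\#$ as the composition~$\mathrm{reg}_Q\circ\mathrm{pr}$, where~$\mathrm{reg}_Q\colon Q\to\GL(Z)$ is the left regular representation of the finite group~$Q$.

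Next I would apply the standard decomposition of the regular representation of a finite group, namely
\[
\mathrm{reg}_Q\simeq\bigoplus_{\rho\text{ irred.}}\rho^{\oplus\deg(\rho)}\,,
\]
where~$\rho$ ranges over the irreducible representations of~$Q$ (see e.g.~\cite[Chapter~2]{Serre}). Precomposing with~$\mathrm{pr}$ yields
\[
1^\#\simeq\bigoplus_{\rho\text{ irred.}}(\rho\circ\mathrm{pr})^{\oplus\deg(\rho)}\,.
\]
Finally, by points~(i) and~(ii) of Remark~\ref{rem:twisted}, the twisted adjacency operator respects conjugation and direct sums of representations, so~$\A_\G^{1^\#}$ is conjugate to~$\bigoplus_{\rho}(\A_\G^{\rho\circ\mathrm{pr}})^{\oplus\deg(\rho)}$, which combined with the first step gives the desired statement.

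No step is a genuine obstacle here: everything is a direct unwinding of definitions combined with Theorem~\ref{thm:main} and classical representation theory. The only point that requires some care is the identification in the second paragraph, where one must verify that normality of~$H$ is exactly what is needed to make the formula~$gr=r'h$ depend only on the class of~$g$ modulo~$H$, so that~$1^\#$ descends through~$\mathrm{pr}$ to a genuine representation of~$Q$ (rather than merely a~$G$-representation obtained by some twisting).
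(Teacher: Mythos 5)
Your proof is correct and follows essentially the same route as the paper: apply Theorem~\ref{thm:main} to the trivial representation, identify the induced representation as~$\rho_{\mathrm{reg}}\circ\mathrm{pr}$ (you spell out the role of normality here, which the paper leaves implicit), decompose the regular representation into irreducibles, and conclude via Remark~\ref{rem:twisted}. No issues.
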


\begin{proof}
This is a direct consequence of Theorem~\ref{thm:main} applied to the trivial representation~$\rho=1$
of~$\pi_1(\widetilde{\G},\tilde{v}_0)$
together with Example~\ref{ex:ind} and the second point of Remark~\ref{rem:twisted}.
\end{proof}

\begin{proof}[Proof of Theorem~\ref{thm:main}]
Let~$p\colon\widetilde{\G}\to\G$ be a covering map sending the base
vertex~$\tilde{v}_0$ of~$\widetilde{\G}$
to the base vertex~$v_0$ of~$\G$,
with~$\G=(\V,\EE,s,t)$ a locally finite and connected directed graph endowed with edge-weights~$x=(x_e)_{e\in\EE}$,
and~$\widetilde{\G}=(\widetilde{\V},\widetilde{\EE},\tilde{s},\tilde{t})$ a (locally finite) connected directed graph
endowed with the lifted edge-weights~$\tilde{x}=(\tilde{x}_{\tilde{e}})_{\tilde{e}\in\widetilde{\EE}}$
defined by~$\tilde{x}_{\tilde{e}}=x_{p(\tilde{e})}$.
As always, we use the same notation~$\widetilde{\G},\G$ for the directed graphs and for the associated graphs.

Let~$\rho\colon\pi_1(\widetilde{\G},\tilde{v}_0)\to\GL(W)$ be a representation,
and let~$\widetilde{\Phi}=(\widetilde{\varphi}_{\tilde{e}})_{\tilde{e}\in\widetilde{\EE}}$ be an arbitrary connection on a vector bundle~$W_{\widetilde{\Gamma}}=\bigoplus_{\tilde{v}\in\widetilde{\V}}W_{\tilde{v}}$ such
that~$\rho^{\widetilde{\Phi}}=\rho$ (recall Section~\ref{sub:bundle}).
Consider the vector bundle on~$\Gamma$ given by~$Z_\Gamma=\bigoplus_{v\in\V}Z_v$, where
\[
Z_v\coloneqq\bigoplus_{\tilde{v}\in p^{-1}(v)}W_{\tilde{v}}\,.
\]
This definition leads to the equality~$Z_\Gamma=\bigoplus_{v\in\V}\bigoplus_{\tilde{v}\in p^{-1}(v)}W_{\tilde{v}}=\bigoplus_{\tilde{v}\in\widetilde{\V}}W_{\tilde{v}}=W_{\widetilde{\Gamma}}$.

Next, consider the connection~$\Phi=(\varphi_e)_{e\in\EE}$
defined by
\[
\varphi_e\coloneqq\bigoplus_{\tilde{e}\in p^{-1}(e)}\widetilde{\varphi}_{\tilde{e}}\colon\bigoplus_{\tilde{e}\in p^{-1}(e)}W_{\tilde{t}(\tilde{e})}\longrightarrow\bigoplus_{\tilde{e}\in p^{-1}(e)}W_{\tilde{s}(\tilde{e})}\,.
\]
Note that the second point of Lemma~\ref{lemma:covering} gives~$\bigoplus_{\tilde{e}\in p^{-1}(e)}W_{\tilde{t}(\tilde{e})}=\bigoplus_{\tilde{v}\in p^{-1}(t(e))}W_{\tilde{v}}=Z_{t(e)}$ and
similarly for~$Z_{s(e)}$. Therefore, the formula displayed above defines a map~$\varphi_e\colon Z_{t(e)}\to Z_{s(e)}$, and~$\Phi$ is a connection on the vector bundle~$Z_\Gamma$.

Let us denote by~$\A^\Phi_\Gamma$ the operator~$\A_\Gamma$ twisted by the  connection~$\Phi$,
and similarly for~$\A_{\widetilde{\Gamma}}^{\widetilde{\Phi}}$.
For any~$f\in Z_\Gamma=W_{\widetilde{\Gamma}}$ and~$e\in\EE_v$, one can
write
\[
f(t(e))=\bigoplus_{\tilde{v}\in p^{-1}(t(e))}f(\tilde{v})=\bigoplus_{\tilde{e}\in p^{-1}(e)}f(\tilde{t}(\tilde{e}))
\]
by Lemma~\ref{lemma:covering}. For any~$v\in\V$, this leads to
\[
(\A^\Phi_\G f)(v)
	=\sum_{e\in\EE_v}x_e\,\varphi_e(f(t(e)))
	=\sum_{e\in\EE_v}x_e\bigoplus_{\tilde{e}\in p^{-1}(e)}\widetilde{\varphi}_{\tilde{e}}(f(\tilde{t}(\tilde{e})))
	=\sum_{e\in\EE_v}\bigoplus_{\tilde{e}\in p^{-1}(e)}\tilde{x}_{\tilde{e}}\,\widetilde{\varphi}_{\tilde{e}}(f(\tilde{t}(\tilde{e})))\,.
\]
Since~$p$ is a morphism of graphs, this sum is equal to
\[
\bigoplus_{\tilde{v}\in p^{-1}(v)}\sum_{\tilde{e}\in\widetilde{\EE}_{\tilde{v}}}\tilde{x}_{\tilde{e}}\,\widetilde{\varphi}_{\tilde{e}}(f(\tilde{t}(\tilde{e})))=\bigoplus_{\tilde{v}\in p^{-1}(v)}(\A_{\widetilde{\Gamma}}^{\widetilde{\Phi}}f)(\tilde{v})=(\A_{\widetilde{\Gamma}}^{\widetilde{\Phi}}f)(v)\in\bigoplus_{\tilde{v}\in p^{-1}(v)}W_{\tilde{v}}=Z_v\,.
\]
In conclusion, the explicit operator~$\A_{\widetilde{\Gamma}}^{\widetilde{\Phi}}$ representing~$\A^\rho_{\widetilde{\G}}$ coincides with~$\A_\G^\Phi$. Therefore, we are left with the proof that the connection~$\Phi$ on~$Z_\G$ is such that~$\rho^\Phi=\rho^\#$.

To do so, let us check that~$\rho^\Phi$ satisfies the two defining properties of the representation~$\rho^\#\colon\pi_1(\G,v_0)\to\GL(Z)$ induced by~$\rho\colon\pi_1(\widetilde{\G},\tilde{v}_0)\to\GL(W)$, as stated in Section~\ref{sub:induced}.
Consider a loop~$\gamma$ in~$\G$ based at~$v_0$. For any~$\tilde{v}\in p^{-1}(v_0)$,
the automorphism~$\rho_{[\gamma]}^\Phi$ maps the elements
of~$W_{\tilde{v}}\subset Z_{v_0}=Z$ to the component of~$Z$
corresponding to the endpoint of the lift of~$\gamma$ starting at~$\tilde{v}$. In other words, and with the notation of Section~\ref{sub:covering}, we have the equality~$\rho_{[\gamma]}^\Phi(W_{\tilde{v}})=W_{[\gamma]\cdot\tilde{v}}$.
Since~$\widetilde{\G}$ is connected, the action of~$\pi_1(\G,v_0)$ on~$p^{-1}(v_0)$ is transitive,
with the isotropy group of~$\tilde{v}_0$ equal to~$p_*(\pi_1(\widetilde{\G},\tilde{v}_0))$.
Therefore, we see that the space~$Z=\bigoplus_{\tilde{v}\in p^{-1}(v_0)}W_{\tilde{v}}$ is indeed the direct sum of the images of~$W_{\tilde{v}_0}$ by any set of representatives of~$\pi_1(\G,v_0)/p_*(\pi_1(\widetilde{\G},\tilde{v}_0))$.
Finally, for any loop~$\widetilde\gamma$ in~$\widetilde{\G}$ based at~$\tilde{v}_0$ and any vector~$w\in W_{\tilde{v}_0}$,
we have~$\rho_{p_*([\widetilde\gamma])}^\Phi(w)=\rho_{[\widetilde\gamma]}^{\widetilde{\Phi}}(w)=\rho_{[\widetilde\gamma]}(w)$ by definition, thus showing the second point. This concludes the proof.
\end{proof}

\subsection{The Artin formalism for graphs}
\label{sub:Artin}

In his foundational work in algebraic number theory~\cite{Artin24,Artin30}, Artin associates an~$L$-series
to any Galois field extension endowed with a representation of its Galois group.
He shows that these~$L$-series satisfy four axioms, the so-called \new{Artin formalism}
(see~\cite[Chapter~XII.2]{Lang} for a modern account).
Since then, analogous axioms have been shown to hold for~$L$-series in topology~\cite{Lang56}, in
analysis~\cite{Lang94}, and for some~$L$-series associated to finite graphs~\cite{S-TII}.

The aim of this subsection is to explain how Theorem~\ref{thm:main} can be interpreted as (the non-trivial
part of) an Artin formalism for graphs. We also show that our approach allows for wide generalisations of the results of Stark and Terras~\cite{S-TI,S-TII}.

\medskip

In what follows, for simplicity, we omit the basepoint when we write fundamental groups. Recall from Section~\ref{sub:operator} that to any weighted locally finite directed graph~$\G=(\V,\EE,s,t)$
endowed with a representation~$\rho\colon\pi_1(\G)\to\GL(W)$, we associate
a twisted weighted adjacency operator~$\A_\G^\rho$ in~$\operatorname{End}(W^\V)$, well defined up to
conjugation.
Recall also that a covering~$p\colon\widetilde{\G}\to\G$ is said to be normal if~$\pi_1(\widetilde{\G})$
is a normal subgroup of~$\pi_1(\G)$. In this case, the quotient group~$G=\pi_1(\G)/\pi_1(\widetilde{\G})$
is called the Galois group of the covering.
Given a normal covering~$p\colon\widetilde{\G}\to\G$ (also simply written as~$\widetilde{\G}/\G$) and a
representation~$\rho\colon G\to\GL(W)$ of its Galois group, one can form
the representation~$\rho\circ\mathrm{pr}\colon\pi_1(\G)\to G\to\GL(W)$ of~$\pi_1(\G)$.
We denote by~$\mathcal{O}(\widetilde{\G}/\G,\rho)=\left[\A_\G^{\rho\circ\mathrm{pr}}\right]\in\operatorname{End}(W^\V)/\GL(W)^\V$ the conjugacy class of the associated twisted adjacency operator.

\begin{prop}
\label{prop:Artin}
The map which to a normal covering~$\widetilde{\G}/\G$ and a representation~$\rho$ of its Galois group
associates the class of operators~$\mathcal{O}(\widetilde{\G}/\G,\rho)$ satisfies the following four axioms.
\begin{enumerate}[1.]
\item~$\mathcal{O}(\widetilde{\G}/\G,1)=[\A_\G]$, the untwisted adjacency operator on~$\G$.
\item Given any two representations~$\rho_1$ and~$\rho_2$ of~$G$,
we have
\[
\mathcal{O}(\widetilde{\G}/\G,\rho_1\oplus\rho_2)=\mathcal{O}(\widetilde{\G}/\G,\rho_1)\oplus \mathcal{O}(\widetilde{\G}/\G,\rho_2)\,.
\]
\item If~$H$ is a normal subgroup of~$G$ and~$\ol{\G}=H\backslash\widetilde{\G}$ denotes the corresponding covering
of~$\G$, then for any representation~$\rho$ of~$G/H$, we have
\[
\mathcal{O}(\ol{\G}/\G,\rho)=\mathcal{O}(\widetilde{\G}/\G,\rho\circ\pi)\,,
\]
where~$\pi\colon G\to G/H$ denotes the canonical projection.
\item If~$H$ is a subgroup of~$G$ and~$\ol{\G}=H\backslash\widetilde{\G}$, then for any
representation~$\rho$ of~$H$, we have
\[
\mathcal{O}(\widetilde{\G}/\ol{\G},\rho)=\mathcal{O}(\widetilde{\G}/\G,\rho^\#)\,,
\]
where~$\rho^\#$ is the representation of~$G$ induced by~$\rho$.
\end{enumerate}
\end{prop}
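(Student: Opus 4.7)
My plan is to verify each of the four axioms. The first three are essentially formal consequences of Remark~\ref{rem:twisted} and of the Galois correspondence between intermediate coverings of $\widetilde{\G}/\G$ and subgroups of $G$, while axiom~4 genuinely relies on Theorem~\ref{thm:main} together with a standard representation-theoretic compatibility.

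Axiom~1 holds because the trivial representation of~$G$ pulled back along~$\mathrm{pr}$ is the trivial representation of~$\pi_1(\G)$, and Remark~\ref{rem:twisted}(iii) identifies~$\A_\G^{1\circ\mathrm{pr}}$ with~$\A_\G$. Axiom~2 follows from the pointwise identity~$(\rho_1\oplus\rho_2)\circ\mathrm{pr}=(\rho_1\circ\mathrm{pr})\oplus(\rho_2\circ\mathrm{pr})$ combined with Remark~\ref{rem:twisted}(ii). For axiom~3, the Galois correspondence identifies~$\pi_1(\ol{\G})\subset\pi_1(\G)$ with~$\mathrm{pr}^{-1}(H)$, so the canonical projection~$\pi_1(\G)\to G(\ol{\G}/\G)=G/H$ associated to the normal covering~$\ol{\G}\to\G$ factors as~$\pi\circ\mathrm{pr}$. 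Hence the two representations~$\rho\circ\mathrm{pr}_{\ol{\G}/\G}$ and~$(\rho\circ\pi)\circ\mathrm{pr}$ of~$\pi_1(\G)$ are literally equal, and yield the same conjugacy class of twisted operators.

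Axiom~4 is the substantive step. Here~$H\le G$ need not be normal, but the covering~$\widetilde{\G}\to\ol{\G}=H\backslash\widetilde{\G}$ is always normal with Galois group~$H$; write~$\mathrm{pr}_H\colon\pi_1(\ol{\G})\to H$ for its canonical projection and set~$\tilde\rho:=\rho\circ\mathrm{pr}_H$. Applying Theorem~\ref{thm:main} to the (possibly non-normal) covering~$\ol{\G}\to\G$ with representation~$\tilde\rho$ of~$\pi_1(\ol{\G})$ yields
\[
\mathcal{O}(\widetilde{\G}/\ol{\G},\rho)=\bigl[\A_{\ol{\G}}^{\tilde\rho}\bigr]=\bigl[\A_\G^{\tilde\rho^\#}\bigr]\,.
\]
The remaining task is to identify~$\tilde\rho^\#$ with~$\rho^\#\circ\mathrm{pr}$ as representations of~$\pi_1(\G)$, so that the right-hand side equals~$\mathcal{O}(\widetilde{\G}/\G,\rho^\#)$.

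This last identification — that induction from a subgroup of a quotient commutes with inflation from that quotient — is the main obstacle, and the only place where the explicit description of induced representations from Section~\ref{sub:induced} is really needed. I would verify it directly: choose a set~$R\subset\pi_1(\G)$ of representatives of~$\pi_1(\G)/\pi_1(\ol{\G})$, and observe that under the natural bijection~$\pi_1(\G)/\pi_1(\ol{\G})\cong G/H$ induced by~$\mathrm{pr}$, its image~$\bar R\subset G$ is a set of representatives of~$G/H$. Both representations then act on the same vector space~$\bigoplus_{r\in R}W$. For~$\gamma\in\pi_1(\G)$ and~$r\in R$, decomposing~$\gamma r=r'h$ with~$r'\in R$ and~$h\in\pi_1(\ol{\G})$ projects via~$\mathrm{pr}$ to the analogous decomposition~$\mathrm{pr}(\gamma)\bar r=\bar{r'}\cdot\bar h$ in~$G$, and both operators send the~$r$-summand to the~$r'$-summand by the same map~$\rho(\mathrm{pr}_H(h))=\rho(\bar h)$. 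This completes axiom~4 and the proof.
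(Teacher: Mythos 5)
Correct, and essentially the paper's own argument: you handle axioms 1--3 via Remark~\ref{rem:twisted} and the factorisation of the canonical projections through the Galois correspondence, and axiom~4 by applying Theorem~\ref{thm:main} to the covering $\ol{\G}\to\G$ with the inflated representation $\rho\circ\mathrm{pr}_H$ of $\pi_1(\ol{\G})$, exactly as in the paper. The only difference is one of detail: where the paper invokes naturality and simply asserts that the representation induced by $\rho\circ\ol{\mathrm{pr}}$ coincides with $\rho^\#\circ\mathrm{pr}$, you verify this induction--inflation compatibility explicitly by matching coset representatives of $\pi_1(\G)/\pi_1(\ol{\G})$ with those of $G/H$, which is a welcome (and correct) elaboration of the same step.
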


\begin{proof}
The first and second points are reformulations of the trivial Remarks~\ref{rem:twisted}~(ii) and~(iii),
while the third point follows from the fact that the
composition~$\pi\circ\mathrm{pr}\colon\pi_1(\G)\to G/H$ coincides with the 
canonical projection of~$\pi_1(\G)$ onto~$\pi_1(\G)/\pi_1(\ol{\G})$.
As for the last point, let~$\ol{p}\colon\ol{\G}\to\G$ denote the relevant covering map,
and~$\ol{\mathrm{pr}}$ the canonical projection of~$\pi_1(\ol{\G})$\
onto~$\pi_1(\ol{\G})/\pi_1(\widetilde{\G})=~H$. By naturality, the composition of~$\ol{\mathrm{pr}}$ 
with the inclusion of~$H$ in~$G$ coincides with~$\mathrm{pr}\circ\ol{p}_*$. Therefore,
the representation induced by~$\rho\circ\ol{\mathrm{pr}}$ coincides with~$\rho^\#\circ\mathrm{pr}$.
The fourth point is now a formal consequence of Theorem~\ref{thm:main}:
\[
\mathcal{O}(\widetilde{\G}/\ol{\G},\rho)=[\A_{\ol{\G}}^{\rho\circ\ol{\mathrm{pr}}}]
=[\A_\G^{(\rho\circ\ol{\mathrm{pr}})^\#}]=[\A_\G^{\rho^\#\circ\mathrm{pr}}]=\mathcal{O}(\widetilde{\G}/\G,\rho^\#)\,.\qedhere
\]
\end{proof}

\medskip

With the~$L$-series of~\cite{S-TII} in mind, it is natural to
consider~$\det(\operatorname{I}-\A_\G^{\rho\circ\mathrm{pr}})^{-1}$ as the object of study.
The fact that these~$L$-series satisfy the Artin formalism follows from the proposition above.

Actually, our method easily yields results on more general~$L$-series, as follows.
Let us fix a map associating to a weighted graph~$(\sG,\sx)$
a weighted directed graph~$(\G,x)$, as in Examples~\ref{ex:Delta}--\ref{ex:S-T}. Formally, we want this assignment to preserve
the ingredients of Theorem~\ref{thm:main}:
a covering map~$\widetilde{\sG}\to\sG$ of locally finite connected graphs is sent to a covering
map~$\widetilde{\G}\to\G$ of locally finite connected digraphs, and there is a natural group
homomorphism~$\alpha\colon\pi_1(\G)\to\pi_1(\sG)$.
Given any representation~$\rho$ of~$\pi_1(\sG)$, we can now consider the~$L$-series
\[
L(\sG,\sx,\rho)=\det(\operatorname{I}-\A_\G^{\rho\circ\alpha})^{-1}\in\C[\![\sx]\!]\,.
\]
By the Amitsur formula (see~\cite{Ami,R-S}), it can be written as
\[
L(\sG,\sx,\rho)=\prod_{[\gamma]}\det(1-x(\gamma)\rho_\gamma)^{-1}\,,
\]
where the product is over all loops~$\gamma$ in~$\G$ that cannot be expressed
as~$\delta^\ell$ for some path~$\delta$ and integer~$\ell>1$,
loops considered up to change of base vertex.
Also,~$x(\gamma)$ denotes the product of the weights of the edges of~$\gamma$, while~$\rho_\gamma$ is
the monodromy of the loop~$\gamma$. (Note that changing the base point yields a conjugate monodromy,
so~$\det(1-x(\gamma)\rho_\gamma)$ is well defined.)
Of course, these loops in~$\G$ correspond to some class of loops in~$\sG$, a class which depends on the way~$\G$ is obtained from~$\sG$.
But for any such assignment, the results of Section~\ref{sub:main} have straightforward implications on the corresponding~$L$-series,
and on the corresponding class of loops in~$\sG$.

For concreteness, let us focus on the directed line graph assignment~$\sG\mapsto\G$ described in Example~\ref{ex:S-T}.
The corresponding twisted weighted operator~$\operatorname{I}-\A_\G^{\rho\circ\alpha}$ coincides with the operator
considered in~\cite[Theorem~7]{S-TII}, where the authors restrict themselves
to representations of a finite quotient of~$\pi_1(\sG)$, i.e. representations of the Galois group of a finite cover of~$\sG$.
In the expression displayed above, the product is over so-called \new{prime cycles} in~$\sG$,
i.e. equivalence classes of cyclic loops in~$\sG$ that do not contain a subpath of the form~$(e,\ol{e})$ and that cannot be expressed as
the power of a shorter loop.
In the special case when~$\rho$ factorises through a finite quotient of~$\pi_1(\sG)$,
this is what Stark and
Terras define as the \new{multiedge Artin~$L$-function} of~$\sG$, an object extending
several other functions introduced in~\cite{S-TI,S-TII}.

The theory of Section~\ref{sec:main} applied to~$\A_\G^{\rho\circ\alpha}$ now allows us to easily extend their results
to this more general~$L$-function.
For example, our Corollary~\ref{cor:1} shows that if~$\widetilde{\sG}$
is a finite connected covering graph of a connected graph~$\sG$, then~$L(\sG,\sx,1)^{-1}$
divides~$L(\widetilde{\sG},\tilde{\sx},1)^{-1}$, extending Corollary~1 of~\cite[Theorem~3]{S-TI}.
Also, our Corollary~\ref{cor:2} recovers the corollary of~\cite[Proposition~3]{S-TII},
while our Theorem~\ref{thm:main} extends Theorem~8 of~\cite{S-TII}.
Finally, expanding the equality~$\log L(\widetilde{\sG},\tilde{\sx},\rho)=\log L(\sG,\sx,\rho^\#)$
yields an extension of the technical Lemma~7 of~\cite{S-TII} to more general covers and representations.

\medskip

We conclude this section by recalling that our approach immediately yields similar results for any
assignment~$\sG\mapsto\G$ preserving covering maps.


\section{Combinatorial applications}
\label{sec:app}

Each time the determinant of an operator counts combinatorial objects,
Theorem~\ref{thm:main} and Corollaries~\ref{cor:1} and~\ref{cor:2} have combinatorial implications.
This is the case for the operators given in Examples~\ref{ex:Delta} and~\ref{ex:Kasteleyn},
whose determinants count spanning trees and perfect matchings, respectively.
We explain these applications in Sections~\ref{sub:tree} and~\ref{sub:dimers}.
We also briefly enumerate additional applications in Section~\ref{sub:further}.

\subsection{Spanning trees and rooted spanning forests}
\label{sub:tree}

Our first combinatorial application relies on a slightly generalised version of the
matrix-tree theorem, that we now recall.

Let~$\sG=(\V,\E)$ be a finite graph endowed with symmetric
weights~$\sx=\{\sx_\se\}_{\se\in\E}$, that we consider as formal variables.
Let~$\Delta_\sG$ be the associated Laplacian, acting on~$\C^{\V}$ via
\[
\Delta_\sG f(v)=\sum_{e\in\EE_v}\sx_e(f(v)-f(t(e)))
\]
for~$f\in\C^{\V}$ and~$v\in\V$.
Set~$n\coloneqq |\V|$, and consider the characteristic polynomial in~$|\E|+1$ variables
\[
P_\sG(\lambda)\coloneqq \det(\lambda\operatorname{I}-\Delta_\sG)=\sum_{i=0}^n c_i\,\lambda^i\in\Z[\sx,\lambda]\,.
\]
Then, the coefficient~$c_i\in\Z[\sx]$ admits the combinatorial interpretation
\[
(-1)^{n-i}\,c_i=\sum_{F\subset\sG,\;|\pi_0(F)|=i}\phi(F)\prod_{\se\in\E(F)}\sx_\se\,,
\]
where the sum is over all spanning forests~$F$ in~$\sG$ with~$i$ connected components
(or equivalently, with~$n-i$ edges), and~$\phi(F)\in\Z_+$ denotes
the number of possible \new{roots} of~$F$: if~$F=\bigsqcup_j T_j$ denotes the decomposition
of~$F$ into connected components, then~$\phi(F)=\prod_j|\V(T_j)|$.

For example, there is a unique spanning forest~$F$ in~$\sG$ with~$n$ connected components
(given by the vertices of~$\sG$), it admits a unique root, leading to
the expected value~$c_n=1$. As additional reality checks, we have the
values~$-c_{n-1}=2\sum_{\se\in\E}\sx_\se$ and~$c_0=\det(\Delta_\sG)=0$.
Finally, since connected spanning forests coincide with spanning trees,
and all spanning trees admit exactly~$n$ roots, we have
\[
(-1)^{n-1}\,c_1=n\sum_{T\subset\sG}\prod_{\se\in\E(T)}\sx_\se\,,
\]
the sum being over all spanning trees of~$\sG$. This
latter result is nothing but Kirchoff's matrix-tree theorem.

\begin{remark}
\label{rem:C-L}
This result can be derived from the (usual version of the) matrix-tree theorem
applied to the graph obtained from~$\sG$ by adding one vertex connected to each vertex of~$\sG$ by an edge
of weight~$-\lambda$.

Let us also mention that this result was obtained by Chung and Langlands
in the context of graphs endowed with vertex-weights rather than edge-weights~\cite{C-L}.
Theorem~\ref{thm:main} trivially extends to graphs endowed with
vertex-weights (in addition to edge-weights), and it is a routine task to adapt the results
of the present subsection to this more general case.
\end{remark}

\begin{definition}
\label{def:partition}
The \new{spanning tree partition function} of a weighted graph~$(\sG,\sx)$ is
\[
\sZ_{\mathit{ST}}(\sG,\sx)\coloneqq \sum_{T\subset\sG}\prod_{\se\in\E(T)}\sx_\se\,,
\]
the sum being over all spanning trees in~$\sG$. Similarly,
the \new{rooted spanning forest partition function} of~$(\sG,\sx)$ is
\[
\sZ_{\mathit{RSF}}(\sG,\sx)\coloneqq \sum_{F\subset\sG}\phi(F)\prod_{\se\in\E(F)}\sx_\se\,,
\]
the sum being over all spanning forests in~$\G$.
\end{definition}

Note that if one sets all the weights to~$1$, then~$\sZ_{\mathit{ST}}(\sG,1)$
is the number of spanning trees in~$\sG$,
while~$\sZ_{\mathit{RSF}}(\sG,1)$ counts the number of rooted spanning forests in~$\sG$.

\begin{theorem}
\label{thm:tree}
Let~$\widetilde{\sG}$ be a finite covering graph of a finite connected graph~$\sG$ endowed with
edge-weights~$\sx=\{\sx_\se\}_{\se\in\E}$, and let~$\tilde{\sx}$ denote these weights lifted to
the edges of~$\widetilde{\sG}$. Then~$\sZ_{\mathit{ST}}(\sG,\sx)$
divides~$\sZ_{\mathit{ST}}(\widetilde{\sG},\tilde{\sx})$
and~$\sZ_{\mathit{RSF}}(\sG,\sx)$ divides~$\sZ_{\mathit{RSF}}(\widetilde{\sG},\tilde{\sx})$ in the
ring~$\Z[\sx]$.
\end{theorem}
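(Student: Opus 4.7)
Both divisibility statements will be extracted from Corollary~\ref{cor:1} applied to the Laplacian $\A_\sG=-\Delta_\sG$ of Example~\ref{ex:Delta}, combined with the generalised matrix-tree identities recalled just above the theorem. The common input is the factorisation of characteristic polynomials: Corollary~\ref{cor:1} gives $\Delta_{\widetilde\sG}\sim\Delta_\sG\oplus\Delta_\sG^{\rho'}$ for some representation $\rho'$ of $\pi_1(\sG,v_0)$, so
$$P_{\widetilde\sG}(\lambda)=P_\sG(\lambda)\,Q(\lambda),\qquad Q(\lambda):=\det(\lambda\operatorname{I}-\Delta_\sG^{\rho'}).$$
Because $P_\sG(\lambda)$ is monic in $\lambda$ with coefficients in $\Z[\sx]$, polynomial long division in $\Z[\sx][\lambda]$ forces $Q\in\Z[\sx][\lambda]$ as well; this factorisation in $\Z[\sx][\lambda]$ is the key input.

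For the rooted spanning forest statement, the recalled identity $\sum_{i}(-1)^{n-i}c_i=\sZ_\mathit{RSF}(\sG,\sx)$ together with $P_\sG(\lambda)=\sum_i c_i\lambda^i$ give $\sZ_\mathit{RSF}(\sG,\sx)=(-1)^{n}P_\sG(-1)$ in $\Z[\sx]$, and likewise for $\widetilde\sG$. Evaluating the factorisation at $\lambda=-1$ and tracking signs using $|\widetilde\V|=dn$ immediately produces $\sZ_\mathit{RSF}(\widetilde\sG,\tilde\sx)=(-1)^{(d-1)n}\,\sZ_\mathit{RSF}(\sG,\sx)\,Q(-1)$, and the divisibility in $\Z[\sx]$ follows from $Q(-1)\in\Z[\sx]$.

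For the spanning tree statement I work at $\lambda=0$. Since $\sG$ and $\widetilde\sG$ are finite and connected, $\lambda=0$ is a simple root of both $P_\sG$ and $P_{\widetilde\sG}$; writing $P_\sG(\lambda)=\lambda\,\pi_\sG(\lambda)$ and $P_{\widetilde\sG}(\lambda)=\lambda\,\pi_{\widetilde\sG}(\lambda)$ in $\Z[\sx][\lambda]$ and cancelling $\lambda$ yields $\pi_{\widetilde\sG}=\pi_\sG\,Q$ in $\Z[\sx][\lambda]$. Evaluating at $0$ and using the recalled coefficient identity $\pi_\sG(0)=(-1)^{n-1}n\,\sZ_\mathit{ST}(\sG,\sx)$ produces the identity
$$d\cdot\sZ_\mathit{ST}(\widetilde\sG,\tilde\sx)=\pm\,\sZ_\mathit{ST}(\sG,\sx)\,Q(0)\qquad\text{in }\Z[\sx].$$

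The main obstacle is that this identity a priori gives only the weaker divisibility $\sZ_\mathit{ST}(\sG,\sx)\mid d\,\sZ_\mathit{ST}(\widetilde\sG,\tilde\sx)$, whereas the theorem claims the sharper $\sZ_\mathit{ST}(\sG,\sx)\mid\sZ_\mathit{ST}(\widetilde\sG,\tilde\sx)$. Removing the spurious factor $d$ reduces to showing that $d\mid Q(0)=\pm\det(\Delta_\sG^{\rho'})$ in $\Z[\sx]$. The cleanest route is representation-theoretic: $\rho'$ is the complement of the trivial subrepresentation inside the induced representation $\rho^{\#}=\operatorname{Ind}_{H}^{G}\mathbf 1$ on $G/H$, and a Forman--Kenyon-type formula expresses $\det(\Delta_\sG^{\rho'})$ as a weighted sum over cycle-rooted spanning forests of $\sG$, with each cycle $\gamma$ contributing a factor $\det(\operatorname{I}-\rho'_\gamma)$. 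The permutation-matrix structure of $\rho^{\#}(\gamma)$ then forces each such factor either to vanish (when $\rho^{\#}(\gamma)$ fixes more than one coset of $H$) or to be an explicit multiple of $d$ (when it acts as a single $d$-cycle), so that every surviving summand is divisible by $d$, and the theorem follows.
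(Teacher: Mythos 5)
Your setup (Corollary~\ref{cor:1} applied to Example~\ref{ex:Delta}, the factorisation $P_{\widetilde\sG}=P_\sG\,Q$ with $Q\in\Z[\sx][\lambda]$ by monic division, and evaluation at $\lambda=-1$) is sound and matches the paper's argument for the rooted-spanning-forest statement. But the spanning-tree part has a genuine gap exactly where you flag the "main obstacle". Your proposed removal of the factor $d$ rests on a ``Forman--Kenyon-type formula'' expressing $\det(\Delta_\sG^{\rho'})$ as a sum over cycle-rooted spanning forests with each cycle contributing $\det(\operatorname{I}-\rho'_\gamma)$. No such formula holds for representations of degree larger than $1$ (Forman's theorem is for rank-one connections, and Kenyon's extension uses the special identity $\det(\operatorname{I}-A)=2-\operatorname{tr}A$ valid only for $\operatorname{SL}_2$); here $\deg\rho'=d-1$. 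A degree count already kills it: for the one-vertex graph with a single loop of weight $\sx$, one has $\det(\Delta^{\rho'})=\sx^{\,d-1}\det(2\operatorname{I}-A-A^{-1})$, of degree $d-1$ in $\sx$, whereas a CRSF sum of the claimed form would be linear in $\sx$. The paper itself notes (Example~\ref{ex:Delta}) that for higher-degree representations the determinant expansion involves genuinely more complicated objects. So while your linear-algebra observation about permutation monodromies ($\det(\operatorname{I}-\rho'_\gamma)\in\{0,d\}$) is correct, the combinatorial identity you want to plug it into is false, and the conclusion $d\mid Q(0)$ is not established by this route. (A secondary omission: Corollary~\ref{cor:1} needs $\widetilde\sG$ connected, and the theorem does not assume this; one must first reduce to the connected case, e.g.\ because $\sZ_{\mathit{ST}}$ vanishes and $\sZ_{\mathit{RSF}}$ is multiplicative over components.)

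The fix is much softer than a representation-theoretic divisibility claim, and is what the paper does: from your identity $d\cdot\sZ_{\mathit{ST}}(\widetilde\sG,\tilde\sx)=\pm\,\sZ_{\mathit{ST}}(\sG,\sx)\,Q(0)$ one gets $\sZ_{\mathit{ST}}(\widetilde\sG,\tilde\sx)=\sZ_{\mathit{ST}}(\sG,\sx)\,g(\sx)$ with $g\in\Q[\sx]$. Since the edge-weights are formal variables, each spanning tree of $\sG$ contributes a distinct monomial with coefficient $1$, so $\sZ_{\mathit{ST}}(\sG,\sx)$ is a primitive polynomial in $\Z[\sx]$; as $\sZ_{\mathit{ST}}(\widetilde\sG,\tilde\sx)\in\Z[\sx]$, Gauss's lemma forces $g\in\Z[\sx]$, which is the claimed divisibility. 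In particular one never needs to know that $d$ divides $Q(0)$ beforehand; it comes out as a consequence.
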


This immediately leads to the following corollary. The first point is known since the work of Berman
(see~\cite[Theorem~5.7]{Berman}), while the second one appears to be new.

\begin{corollary}
Let~$\widetilde{\sG}$ be a finite covering graph of a finite connected graph~$\sG$.
\begin{enumerate}[(i)]
\item The number of spanning trees in~$\sG$ divides the number of spanning trees in~$\widetilde{\sG}$.
\item The number of rooted spanning forests in~$\sG$ divides the number of rooted spanning forests in~$\widetilde{\sG}$.\qed
\end{enumerate}
\end{corollary}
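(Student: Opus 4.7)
The plan is to deduce both divisibilities from Corollary~\ref{cor:1}, applied to the digraph $\G$ associated to $\sG$ via the construction of Example~\ref{ex:Delta}. First I would observe that a covering map $\widetilde{\sG}\to\sG$ lifts canonically to a covering map $\widetilde{\G}\to\G$ of the digraphs obtained by adjoining a loop at each vertex, simply by sending the newly-added loop at $\tilde{v}$ to the one at $p(\tilde{v})$; in particular $\widetilde{\G}$ is connected whenever $\widetilde{\sG}$ is. Assuming first that $\widetilde{\sG}$ is connected, Corollary~\ref{cor:1} yields a similarity $\A_{\widetilde{\G}}\sim \A_\G\oplus \A_\G^{\rho'}$ for some representation $\rho'$ of $\pi_1(\G,v_0)$. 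Since $\A_\G=-\Delta_\sG$ (and similarly on the cover), this translates into an identity of characteristic polynomials
\[
\det(\mu I+\Delta_{\widetilde{\sG}})=\det(\mu I+\Delta_\sG)\cdot S(\mu),
\]
with $S(\mu)\in\Z[\sx][\mu]$: the two polynomials being divided have coefficients in $\Z[\sx]$, and $\det(\mu I+\Delta_\sG)$ is monic in $\mu$, so the quotient $S$ shares that property.

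Next, I would use the generalised matrix-tree theorem recalled earlier. Writing $\tilde{P}_\sG(\mu):=\det(\mu I+\Delta_\sG)=\sum_{i=0}^n a_i\,\mu^i$, one has $a_i=\sum_{F:\,|\pi_0(F)|=i}\phi(F)\prod_{\se\in F}\sx_\se$, so $a_0=0$, $a_1=n\,\sZ_\mathit{ST}(\sG,\sx)$, and $\tilde{P}_\sG(1)=\sZ_\mathit{RSF}(\sG,\sx)$; analogous formulae hold on $\widetilde{\sG}$ with $\tilde{n}=dn$ in place of $n$. Evaluating the product formula at $\mu=1$ settles the RSF case at once: $\sZ_\mathit{RSF}(\widetilde{\sG},\tilde{\sx})=\sZ_\mathit{RSF}(\sG,\sx)\cdot S(1)$, and $S(1)\in\Z[\sx]$ yields divisibility in $\Z[\sx]$. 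Comparing the coefficients of $\mu^1$ on both sides (and using $a_0=0$) gives the spanning-tree analogue
\[
d\,\sZ_\mathit{ST}(\widetilde{\sG},\tilde{\sx})=\sZ_\mathit{ST}(\sG,\sx)\cdot S(0).
\]

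The main obstacle is the stray factor $d$ on the left: this identity only delivers divisibility in $\Q[\sx]$, and it is not a priori clear that $S(0)/d$ lies in $\Z[\sx]$. To clear it I would invoke Gauss's lemma in the UFD $\Z[\sx]$: the polynomial $\sZ_\mathit{ST}(\sG,\sx)$ is \emph{primitive}, since distinct spanning trees of $\sG$ give distinct squarefree monomials, each with coefficient~$1$. The multivariate Gauss lemma then implies that a primitive polynomial of $\Z[\sx]$ which divides an element of $\Z[\sx]$ inside $\Q[\sx]$ already divides it in $\Z[\sx]$, so the divisibility upgrades as required. Finally, in the disconnected case $\widetilde{\sG}$ splits into connected covers of $\sG$ to which the argument applies componentwise: $\sZ_\mathit{ST}(\widetilde{\sG},\tilde{\sx})$ then vanishes while $\sZ_\mathit{RSF}(\widetilde{\sG},\tilde{\sx})$ factors as a product across components, so both divisibilities continue to hold.
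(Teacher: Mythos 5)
Your argument is correct and is essentially the paper's own route: it reproves the weighted divisibility of Theorem~\ref{thm:tree} (apply Corollary~\ref{cor:1} to the looped digraph of Example~\ref{ex:Delta}, factor the characteristic polynomial of the Laplacian, read off the RSF and ST partition functions from the generalized matrix-tree coefficients, clear the stray degree factor~$d$ via Gauss's lemma and primitivity of~$\sZ_{\mathit{ST}}(\sG,\sx)$, and reduce to the case of connected~$\widetilde{\sG}$), after which the numerical statement follows by specializing all weights to~$1$. The only cosmetic deviations from the paper are your convention $\det(\mu \operatorname{I}+\Delta_\sG)$ with evaluation at $\mu=1$ (rather than $\lambda=-1$) and extraction of the $\mu^1$-coefficient (rather than dividing by $\lambda$ and setting $\lambda=0$), and your use of division by a monic polynomial over $\Z[\sx]$ in place of the paper's first application of Gauss's lemma to see that the cofactor lies in $\Z[\sx,\lambda]$.
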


\begin{proof}[Proof of Theorem~\ref{thm:tree}]
First note that~$\sZ_{\mathit{RSF}}(\widetilde{\sG},\tilde\sx)$ is multiplicative with respect to connected sums
while~$\sZ_{\mathit{ST}}(\widetilde{\sG},\tilde\sx)$ vanishes for~$\widetilde{\sG}$ not connected.
Therefore, it can be assumed that~$\widetilde{\sG}$ is connected.
Let~$\widetilde{\sG}\to\sG$ be a covering map between two finite connected graphs,
with edge-weights~$\sx$ on~$\sG$ inducing lifted edge-weights~$\tilde{\sx}$ on~$\widetilde{\sG}$.
Let~$\widetilde{\G}$ (resp.~$\G$) be the graph associated with~$\widetilde{\sG}$ (resp.~$\sG$) as in Example~\ref{ex:Delta}.
Note that the graphs~$\widetilde{\G}$ and~$\G$ remain finite and connected, and the covering map~$\widetilde{\sG}\to\sG$ trivially extends to a covering
map~$\widetilde{\G}\to\G$. By Example~\ref{ex:Delta} and Corollary~\ref{cor:1}, we know
that~$\Delta_{\widetilde{\sG}}=\A_{\widetilde{\G}}$
is conjugate to~$\A_\G\oplus\A_\G^{\rho}=\Delta_\sG\oplus\Delta_\sG^{\rho}$ for some
representation~$\rho$ of~$\pi_1(\G,v_0)$. Therefore,
setting~$P_\sG^\rho(\lambda)\coloneqq \det(\lambda\operatorname{I}-\Delta^{\rho}_\sG)\in\C[\sx,\lambda]$,
we have the equality
\[
P_{\widetilde{\sG}}(\lambda)=P_{\sG}(\lambda)\cdot P^\rho_{\sG}(\lambda)\in\C[\sx,\lambda]\,.
\]
Observe that~$P_{\widetilde{\sG}}(\lambda)$ and~$P_{\sG}(\lambda)$ belong to~$\Z[\sx,\lambda]$, so~$P^\rho_{\sG}(\lambda)$ belongs
to the intersection of~$\C[\sx,\lambda]$ with the field of fractions~$Q(\Z[\sx,\lambda])=\Q(\sx,\lambda)$, i.e. it belongs to the ring~$\Q[\sx,\lambda]$.
Since the leading~$\lambda$-coefficient of~$P_{\sG}(\lambda)$ is equal to~$1$, the greatest common divisor of its coefficients is~$1$.
An application of Gauss's lemma (see e.g.~\cite[Chapter~IV.2, Corollary~2.2]{LangAlgebra}) now implies that~$P^\rho_{\sG}(\lambda)$ belongs to~$\Z[\sx,\lambda]$.
In conclusion,~$P_{\sG}(\lambda)$ divides~$P_{\widetilde{\sG}}(\lambda)$ in~$\Z[\sx,\lambda]$.

By the extended matrix-tree theorem stated above, we have~$P_{\sG}(-1)=\pm\sZ_{\mathit{RSF}}(\sG,\sx)$
divides~$P_{\widetilde{\sG}}(-1)=\pm\sZ_{\mathit{RSF}}(\widetilde{\sG},\tilde{\sx})$ in~$\Z[\sx]$,
proving the second claim.

To show the first one, consider again the
equation~$P_{\widetilde{\sG}}(\lambda)=P_{\sG}(\lambda)\cdot P^\rho_{\sG}(\lambda)$ in~$\Z[\sx,\lambda]$,
and observe that~$P_{\widetilde{\sG}}(\lambda)$ and~$P_{\sG}(\lambda)$ are both multiples of~$\lambda$.
Dividing both sides by~$\lambda$ and setting~$\lambda=~0$, the matrix-tree theorem
(in the form stated above) implies
\[
|\V(\widetilde{\sG})|\cdot\sZ_{\mathit{ST}}(\widetilde{\sG},\tilde{\sx})=\pm
|\V(\sG)|\cdot\sZ_{\mathit{ST}}(\sG,\sx)\cdot P^\rho_{\sG}(0)\,,
\]
i.e.~$\sZ_{\mathit{ST}}(\widetilde{\sG},\tilde{\sx})=\sZ_{\mathit{ST}}(\sG,\sx)\cdot g(\sx)$,
with~$g(\sx)=\frac{\pm 1}{\deg(\widetilde{\sG}/\sG)}P^\rho_{\sG}(0)\in\Q[\sx]$.
Since both~$\sZ_{\mathit{ST}}(\widetilde{\sG},\tilde{\sx})$ and~$\sZ_{\mathit{ST}}(\sG,\sx)$
belong to~$\Z[\sx]$ and the greatest common divisor of the coefficients
of~$\sZ_{\mathit{ST}}(\sG,\sx)$ is~$1$, one more application of Gauss's lemma yields that~$g(\sx)$
lies in~$\Z[\sx]$, and concludes the proof.
\end{proof}

\subsection{Perfect matchings}
\label{sub:dimers}

In this subsection, we review some applications of Theorem~\ref{thm:main} to perfect matchings,
and more generally to the dimer model.

Recall that a \new{perfect matching} (or \new{dimer configuration}) in a graph~$\G$ is a
family of edges~$M\subset\E$ such that
each vertex of~$\G$ is adjacent to a unique element of~$M$.
If~$\G$ is finite and endowed with symmetric edge-weights~$\sx=\{\sx_\se\}_{\se\in\E}$, then
one defines the \new{dimer partition function} of~$\G$ as
\[
\sZ_\text{dimer}(\G,\sx)=\sum_{M}\prod_{\se\in M}\sx_\se\,,
\]
the sum being over all perfect matchings in~$\G$.
Note that if all the weights are equal to~$1$, then~$\sZ_\text{dimer}(\G,1)$ simply counts the
number of perfect matchings in~$\G$.

Now, assume that~$\G$ is embedded in the plane, and endowed with an orientation of its edges so that
around each face of~$\G\subset\R^2$, there is an odd number of edges oriented clockwise.
Let~$x=\{x_e\}_{e\in\EE}$ be the anti-symmetric edge-weights obtained as in Example~\ref{ex:Kasteleyn},
and let~$\A_\G$ be the associated weighted skew-adjacency operator. By Kasteleyn's celebrated theorem~\cite{Ka1,Ka2},
the Pfaffian of~$\A_\G$ is equal to~$\pm\sZ_\text{dimer}(\G,\sx)$.

With this powerful method in hand, we can try to use Theorem~\ref{thm:main} in studying the dimer
model on symmetric graphs. Quite unsurprisingly, the straightforward applications of our theory are
not new. Indeed, the only divisibility statement that we obtain via Corollary~\ref{cor:1}
is the following known result (see Theorem~3 of~\cite{Joc} for the bipartite case, and Section~IV.C
of~\cite{Kuperberg} for a general discussion).

\begin{prop}
\label{prop:dimer}
Fix a planar, finite, connected weighted graph~$(\widetilde{\G},\tilde{\sx})$ invariant
under rotation around a point in the complement of~$\widetilde{\G}$,
of angle~$\frac{2\pi}{d}$ for some odd integer~$d$.
Let~$(\G,\sx)$ be the resulting quotient weighted graph.
Then, the partition function~$\sZ_\mathrm{dimer}(\G,\sx)$
divides~$\sZ_\mathrm{dimer}(\widetilde{\G},\tilde{\sx})$ in the ring~$\Z[\sx]$.
\end{prop}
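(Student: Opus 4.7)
The strategy is to apply Corollary~\ref{cor:1} to the Kasteleyn weighted skew-adjacency operator described in Example~\ref{ex:Kasteleyn}. Recall that when $\G$ is planar and equipped with a \emph{Kasteleyn orientation}, i.e.\ an edge orientation with an odd number of clockwise edges around every bounded face, the skew-symmetric operator $\A_\G$ of Example~\ref{ex:Kasteleyn} satisfies $\det(\A_\G)=\sZ_\mathrm{dimer}(\G,\sx)^2$ in $\Z[\sx]$ by Kasteleyn's theorem. The first step is thus to pick Kasteleyn orientations on $\G$ and on $\widetilde{\G}$ that are compatible with the covering map, in the sense that the orientation on $\widetilde{\G}$ is the pullback of the one on $\G$. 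Since the rotation center $p$ lies in the complement of $\widetilde{\G}$, the quotient of $\R^2\setminus\{p\}$ by the rotation of order $d$ is topologically a punctured plane, so $\G$ is planar and admits a Kasteleyn orientation $\omega$. Lifting $\omega$ to $\widetilde{\G}$ yields an orientation whose Kasteleyn condition is automatic at every bounded face that projects homeomorphically to a face of $\G$. The only delicate case is a face of $\widetilde{\G}$ wrapping around $p$ and covering its image face in $\G$ with multiplicity $d$: its number of clockwise edges equals $d$ times the corresponding count in $\G$, and since $d$ is odd, oddness is preserved.

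With compatible orientations fixed, the lifted anti-symmetric edge-weights $\tilde{x}$ on $\widetilde{\G}$ coincide with the covering lifts of $x$ on $\G$, so Corollary~\ref{cor:1} applies and gives that $\A_{\widetilde{\G}}$ is conjugate to $\A_\G\oplus\A_\G^{\rho'}$ for some representation $\rho'$ of $\pi_1(\G,v_0)$. Taking determinants, we obtain
\[
\sZ_\mathrm{dimer}(\widetilde{\G},\tilde{\sx})^2=\sZ_\mathrm{dimer}(\G,\sx)^2\cdot\det(\A_\G^{\rho'})
\]
in $\C[\sx]$. Since both $\sZ_\mathrm{dimer}(\widetilde{\G},\tilde{\sx})^2$ and $\sZ_\mathrm{dimer}(\G,\sx)^2$ lie in $\Z[\sx]$, so does their quotient $\det(\A_\G^{\rho'})$. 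At this point, one follows verbatim the concluding argument of the proof of Theorem~\ref{thm:tree}: working in the UFD $\Q[\sx]$, write the ratio $\sZ_\mathrm{dimer}(\widetilde{\G},\tilde{\sx})/\sZ_\mathrm{dimer}(\G,\sx)=p/q$ in lowest terms, observe that $p^2/q^2\in\Q[\sx]$ forces $q=\pm 1$, and invoke Gauss's lemma to promote the resulting divisibility from $\Q[\sx]$ to $\Z[\sx]$.

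The main obstacle is the orientation-theoretic step in the first paragraph: establishing that the pullback of a Kasteleyn orientation on $\G$ is again Kasteleyn on $\widetilde{\G}$. This is the unique place where the hypothesis that $d$ is odd enters in an essential way, and it explains why the naive statement does not extend to rotations of even order; handling those cases requires twisting by non-trivial representations of the Galois group, along the lines developed in~\cite{Cimasoni-Klein}.
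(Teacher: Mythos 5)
Your proposal is correct and follows essentially the same route as the paper: fix a clockwise-odd (Kasteleyn) orientation on the quotient~$\G$, lift it to~$\widetilde{\G}$ and use oddness of~$d$ to handle the face containing the rotation center, apply Corollary~\ref{cor:1} together with Kasteleyn's theorem to get $\sZ_\mathrm{dimer}(\widetilde{\G},\tilde{\sx})^2=\sZ_\mathrm{dimer}(\G,\sx)^2\cdot\det(\A_\G^{\rho'})$, and conclude by unique factorisation and Gauss's lemma as in the proof of Theorem~\ref{thm:tree}. The only differences are cosmetic (you argue in $\Q[\sx]$ where the paper uses $\C[\sx]$, and you spell out the planarity of the quotient slightly more explicitly), so no further comment is needed.
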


\begin{proof}
Let us fix an orientation of the edges of~$\G$ satisfying the clockwise-odd condition.
It lifts to an orientation of~$\widetilde{\G}$ which trivially satisfies the same condition
around all faces except possibly the face containing the center of rotation; for this latter face,
it does satisfy the condition since~$d$ is odd.
Hence, we have a~$d$-fold cyclic covering of connected weighted
graphs~$(\widetilde{\G},\tilde{x})\to(\G,x)$, and Corollary~\ref{cor:1} can be applied.
Together with Kasteleyn's theorem, it yields the following equality in~$\C[\sx]$:
\[
\sZ_\text{dimer}(\widetilde{\G},\tilde{\sx})^2=\det(\A_{\widetilde{\G}})=
\det(\A_\G)\det(\A_\G^\rho)=\sZ_\text{dimer}(\G,\sx)^2\cdot\det(\A_\G^\rho)\,.
\] 
This ring being factorial, it
follows that~$\sZ_\text{dimer}(\widetilde{\G},\tilde{\sx})=\sZ_\text{dimer}(\G,\sx)\cdot g$
for some~$g\in\C[\sx]$. The fact that~$g$ belongs to~$\Z[\sx]$ follows from Gauss's lemma
as in the proof of Theorem~\ref{thm:tree}.
\end{proof}

Our approach is limited by the fact that we consider graph coverings~$\widetilde{\G}\to\G$ which,
in the case of normal coverings, correspond to free actions of~$G(\widetilde{\G}/\G)$ on~$\widetilde{\G}$.
For this specific question of enumerating dimers on symmetric planar graphs, the discussion of Section~IV
of~\cite{Kuperberg} is more complete, as non-free actions are also considered.

\medskip

However, our approach is quite powerful when applied to non-planar graphs.
Indeed, recall that Kasteleyn's theorem can be extended to weighted graphs embedded in
a closed (possibly non-orientable) surface~$\Sigma$, but the computation of the
dimer partition function requires the Pfaffians of~$2^{2-\chi(\Sigma)}$ different
(possibly complex-valued) skew-adjacency matrices~\cite{Tes,C-R,Cim}.
In particular, the partition function of any graph embedded in the torus~$\mathbb{T}^2$ is given by~$4$ Pfaffians.
For the Klein bottle~$\K$, we also need~$4$ Pfaffians, which turn out to be two pairs of conjugate complex numbers, so~$2$ well-chosen Pfaffians are sufficient. We now illustrate the use of Theorem~\ref{thm:main} in these two cases.

Let us first consider a toric graph~$\G\subset\mathbb{T}^2$, and let~$\widetilde{\G}=\G_{mn}$
denote the lift of~$\G$ by the natural~$m\times n$ covering of the torus by itself.
This covering is normal with Galois group~$G(\widetilde{\G}/\G)\simeq\Z/m\Z\oplus\Z/n\Z$.
This group being abelian, all the irreducible representations are of degree~$1$;
more precisely, they are given by~$\{\rho(z,w)\}_{w^m=1,\,z^n=1}$, where~$\rho(z,w)$
maps a fixed generator of~$\Z/m\Z$ (resp.~$\Z/n\Z$) to~$w\in\C^*$ (resp.~$z\in\C^*$).
Writing~$P_{mn}(z,w)=\det(\A_{\G_{mn}}^{\rho(z,w)})$ and~$P_{1,1}=P$, Corollary~\ref{cor:2} immediately yields
the equality
\[
P_{mn}(1,1)=\prod_{z^n=1}\prod_{w^m=1}P(z,w)\,.
\]
This is the well-known Theorem~3.3 of~\cite{KOS}, a result of fundamental importance in the study of the dimer
model on biperiodic graphs.

\begin{figure}
\includegraphics[width=9cm]{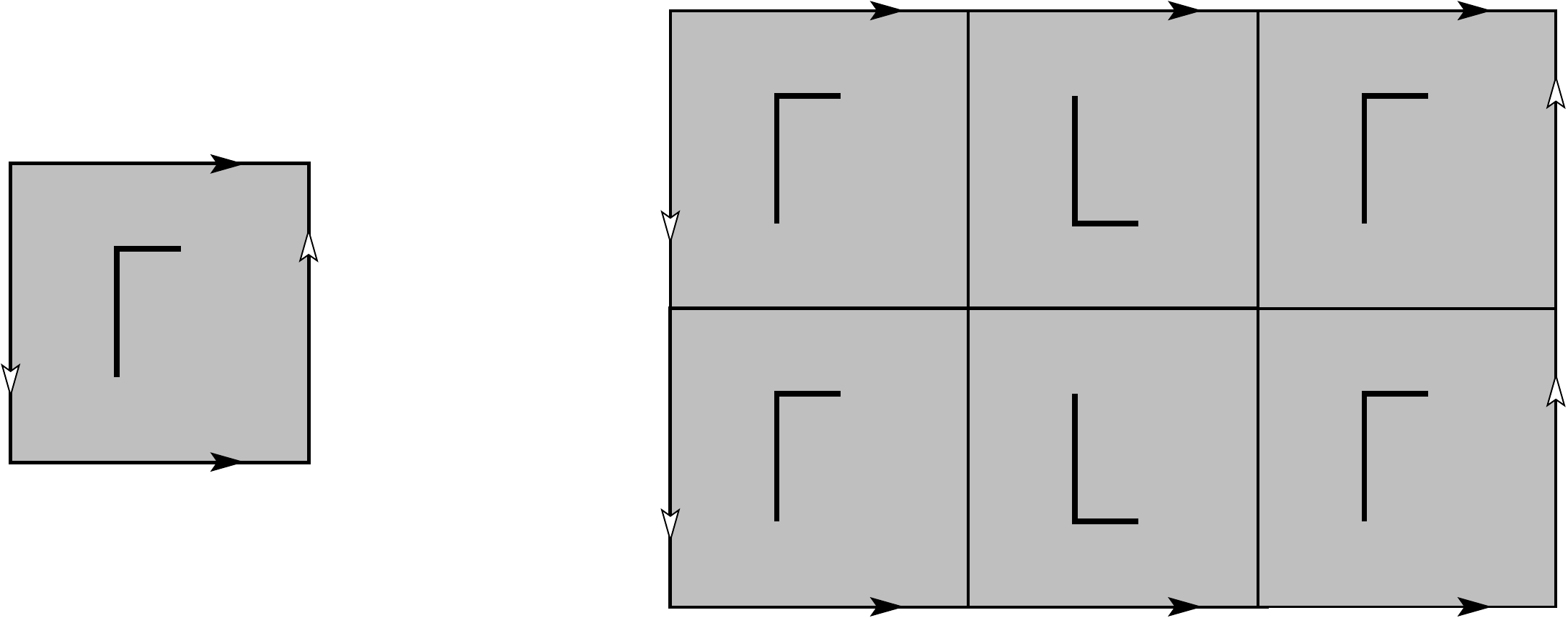}
\caption{A graph~$\G$ embedded in the Klein bottle~$\K$ (pictured as a square with opposite sides identified
according to the arrows), and the lift~$\G_{mn}\subset\K$, here with~$m=2$ and~$n=3$.}
\label{fig:Gmn}
\end{figure}

Let us now consider a weighted graph~$\G$ embedded in the Klein bottle~$\K$,
and let~$\widetilde{\G}=\G_{mn}$ denote the lift of~$\G$ by the natural~$m\times n$
cover~$\K_{mn}\to\K$ of the Klein bottle by itself (with~$n$ odd), as illustrated in Figure~\ref{fig:Gmn}.
Now, we can interpret the two skew-adjacency matrices of~$\widetilde{\G}=\G_{mn}$
used in the computation of the corresponding dimer partition function as
weighted adjacency operators twisted by~$1$-dimensional representations~$\rho,\rho'$ of~$\pi_1(\K_{mn})<\pi_1(\K)$.
Using Theorem~\ref{thm:main}, we see that these matrices are conjugate to
the skew-adjacency operators on~$\G\subset\K$ twisted by the corresponding
induced representations~$\rho^\#,(\rho')^\#$ of~$\pi_1(\K)$.
Unlike that of the torus, the fundamental group of the Klein bottle is not abelian,
so the representations~$\rho^\#,(\rho')^\#$ need not split as products of~$1$-dimensional
representations.
It turns out that they split as products of representations of degree~$1$ and~$2$,
yielding a closed formula for~$\sZ_\text{dimer}(\G_{mn},x)$ in terms of determinants of~$\A_\G^\tau$,
with~$\tau$ of degree~$1$ and~$2$.
This result is at the core of the study of the dimer model on Klein bottles of the first-named
author~\cite{Cimasoni-Klein}.

\medskip

As a final remark, let us note that all the considerations of this subsection
can be applied equally well to the Ising model,
either via the use of Kac-Ward matrices~\cite{K-W}, or
via skew-adjacency matrices on the associated Fisher graph~\cite{Fisher}.

\subsection{Further combinatorial applications}
\label{sub:further}

We conclude this article with a very brief and informal description of additional applications of
our results.

\medskip

As discovered by Forman~\cite{Forman}, the determinant of~$\Delta_\sG^\rho$ with~$\deg(\rho)=1$ can be
expressed as a sum over \new{cycle-rooted spanning forests} (CRSFs) in~$\sG$, each forest being counted with a
complex weight depending on~$\rho$.
If there is a finite connected covering~$\widetilde{\sG}\to\sG$ and a degree~$1$ representation
of~$\pi_1(\widetilde{\sG},\tilde{v}_0)$ such that the induced representation of~$\pi_1(\sG,v_0)$
admits a degree~$1$ subrepresentation~$\rho'$, then the CRSF partition function on~$\sG$ twisted 
by~$\rho'$ divides the partition function on~$\widetilde{\sG}$ twisted by~$\rho$, in the ring~$\C[\sx]$.
Furthermore, in the case of a normal abelian covering of degree~$d$, Corollary~\ref{cor:2} gives a factorisation
of the CRSF partition function of~$\widetilde{\sG}$ in terms of~$d$ CRSF partition functions of~$\sG$.

\medskip

Finally, let~$X$ be a finite~CW-complex
of dimension~$r$ with weights~$\sx=(\sx_\se)_\se$ associated
to the cells of top dimension. Let~$\sG$ be the weighted graph with vertex set given by the~$(r-1)$-dimensional cells
of~$X$, two such vertices being connected by an unoriented edge of~$\sG$ each time they are in the boundary
of an~$r$-dimensional cell. (Note that if~$r=1$, then the~$1$-dimensional cell complex~$X$ is nothing but the
geometric realisation of the graph~$\sG$.) Finally, let~$\G$ denote the weighted graph obtained from~$\sG$ as in
Example~\ref{ex:Delta}. Then, the resulting operator~$\A_\G$ is the \new{Laplacian}~$\Delta_X$ acting on~$r$-cells of~$X$.
This operator can be used to count so-called \new{higher dimensional rooted forests}
in~$X$, see~\cite{Kalai,B-K} and references therein.
Using Corollary~\ref{cor:1}, it is now straightforward to prove that, given any finite cover~$\widetilde{X}\to X$,
the corresponding rooted forest partition function of~$X$ divides the rooted forest partition function
of~$\widetilde{X}$, extending Theorem~\ref{thm:tree} to higher dimensional objects.

\bigskip

\noindent
\textbf{Acknowledgements.} D.C. thanks Pierre de la Harpe and Anders Karlsson for
useful conversations. The authors thank the referees for their constructive comments,
and Thierry L{\'e}vy for suggesting a simpler proof of the main result.

\nocite{*}
\bibliographystyle{amsplain-ac}
\bibliography{coverings-biblio-v2}

\appendix

\section{Addendum}

After our paper was in press, we became aware of the article~\cite{CDHGZZ} whose main result is very similar to Theorem~\ref{thm:tree}.
In a nusthell, Theorem~1.3 of~\cite{CDHGZZ} extends the spanning tree part of our Theorem~\ref{thm:tree}
from graphs to digraphs, showing the divisibility of the partition functions enumerating the corresponding
combinatorial objects, so-called {\em rooted arborescences\/}.
Note that this result can easily be obtained by our methods using the digraph version of the matrix-tree theorem.

In Conjecture~1.7 of~\cite{CDHGZZ}, it is asked whether the (integer) coefficients of the quotient polynomial are always non-negative. 
Moreover, Conjecture~5.5 of~\cite{CDHGZZ} asserts that this polynomial can be expressed as a sum over tuples of vector fields on~$\sG$.
The authors prove their conjectures in the case of coverings of degree $2$.
The aim of this addendum is to provide an affirmative answer to these conjectures in the case of arbitrary degree coverings of (undirected) graphs.

To state the corresponding result, recall that a \new{(non-zero) vector field} on a directed graph~$\sG$ is a directed subgraph of~$\sG$ consisting of one outgoing edge for each vertex of~$\sG$. 
We let~$\mathcal{V}(\sG)$ denote the set of vector fields on~$\sG$.
Note that vector fields appear in \cite{Forman} and are also known as {\em oriented cycle rooted spanning forests\/}, see e.g.~\cite{Kenyon}.

\begin{theorem}
\label{thm:addendum}
Let~$\widetilde{\sG}$ be a covering graph of degree~$(d+1)$ of a finite connected graph~$\sG$ endowed with
edge-weights~$\sx=\{\sx_\se\}_{\se\in\E}$, and let~$\tilde{\sx}$ denote these weights lifted to
the edges of~$\widetilde{\sG}$. Then~$\sZ_{\mathit{ST}}(\sG,\sx)$
divides~$\sZ_{\mathit{ST}}(\widetilde{\sG},\tilde{\sx})$ in the ring~$\Z[\sx]$, and the quotient can be expressed as
\begin{equation}\label{eq:vector-fields}
\frac{\sZ_{\mathit{ST}}(\widetilde{\sG},\tilde{\sx})}{\sZ_{\mathit{ST}}(\sG,\sx)}\;\;\;=\sum_{(\gamma_1,\dots,\gamma_{d})\in\mathcal{V}(\sG)^d} f(\gamma_1,\dots,\gamma_{d})
\prod_{k=1}^d \prod_{e\in \gamma_k} \sx_e
\end{equation}
for some non-negative integer-valued map~$f\colon\mathcal{V}(\sG)^d\to\{0,1,2,\dots\}$.
\end{theorem}

Let us stress that the right-hand side of \eqref{eq:vector-fields} is not written as a linear combination of distinct monomials: in particular, upon shuffling elements of a $d$-tuple of distinct vector fields, one obtains the same monomial $\prod_{k=1}^d \prod_{e\in \gamma_k} \sx_e$.

\begin{proof}
Let us use the same notations as in the proof of Theorem~\ref{thm:tree}. We know from that theorem that the quotient $\sZ_{\mathit{ST}}(\widetilde{\sG},\tilde{\sx})/\sZ_{\mathit{ST}}(\sG,\sx)$ is a homogeneous integer-coefficient polynomial equal to $\det \Delta_{\sG}^\rho\in\Z[\sx]$. We thus need to show the following two facts:
\begin{itemize}
\item[1.] the coefficients of this polynomial are non-negative;
\item[2.] for each monomial $\prod_{\sf e\in \E} \sx_\se^{n_\se}$, there exists $(\gamma_1,\ldots, \gamma_d)\in \mathcal{V}(\sG)^d$ such that for each~$\se \in \E$, whose oriented versions we denote by $\{e,\overline{e}\}$, we have $n_\se=\sum_{k=1}^d \left(\mathbbm{1}_{\{e\in \gamma_k\}}+\mathbbm{1}_{\{\overline{e}\in \gamma_k\}}\right)$.
\end{itemize}

\smallskip

We start with the first assertion concerning the non-negativity of the coefficients.
Let us first recall how the representation~$\rho$ of~$\pi_1(\sG,v_0)$ is obtained (see Example~\ref{ex:ind}), and investigate some of its properties. 
Consider the representation~$1^\#$ of~$\pi_1(\sG,v)$ induced by the trivial
representation of~$\pi_1(\widetilde{\sG},\tilde{v})$: it is given by the action by left multiplication of~$\pi_1(\sG,v)$
on the vector space~$Z$ with basis~$\pi_1(\sG,v)/\pi_1(\widetilde{\sG},\tilde{v})$.
Fix the inner product~$\langle\cdot, \cdot\rangle$ on~$Z$ with respect to which this basis is orthonormal.
Since~$1^\#$ acts by permutation on the elements of this basis, this representation is unitary with respect to
the inner product defined above.
Moreover, since~$Z$ is finite-dimensional, we have an orthogonal decomposition~$Z=\C\oplus W$,
where~$\C$ is generated by the sum of the basis vectors and~$W$ consists of vectors with vanishing sum of coordinates.
This leads to a decomposition~$1^\#=1\oplus\rho$,
with~$\rho$ the representation appearing in the polynomial~$\det\Delta_{\sG}^\rho\in\Z[\sx]$.
Since~$1^\#$ is unitary, 
we have now shown that~$\rho$ is a
unitary representation.
Hence, we are left with the proof that the coefficients of the polynomial~$\det \Delta_{\sG}^\rho$ are non-negative whenever~$\rho$ is a unitary representation.

To do so, let us consider a unitary connection~$\Phi=(\varphi_e)_{e\in\EE}$ representing
the unitary representation~$\rho\in \GL(W)$ (recall subsection~\ref{sub:bundle}),
which defines a twisted Laplacian $\Delta_\sG^\Phi$ whose gauge-equivalence class is~$\Delta_{\sG}^\rho$.
In order to factor~$\Delta_\sG^\Phi$, we introduce a refinement of the connection as in~\cite{Kenyon}:
for each directed edge~$e\in\EE$, consider unitary automorphisms~$\varphi_{s(e),e},\varphi_{e,t(e)}\in \GL(W)$ such that~$\varphi_{\overline{e},t(\overline{e})}=\varphi_{s(e),e}^{-1}$ and~$\varphi_e=\varphi_{s(e),e}\circ \varphi_{e,t(e)}$. 
Let us further define the spaces of~$W$-valued forms on~$\sG$ via 
\[
\Omega^0(\sG,W)=\{f:\V\to W\}=W^\V\quad\text{ and }\quad\Omega^1(\sG,W)=\{\alpha:\EE \to W \;|\; \alpha_e=-\alpha_{\overline{e}}\}\simeq W^\E\,.
\]
Note that the inner product~$\langle\cdot, \cdot\rangle$ on~$W$ naturally extends to inner products on~$\Omega^0(\sG,W)$ and~$\Omega^1(\sG,W)$ via
\[
(f,g)_{\Omega^0}=\sum_{v\in \V} \langle f(v),g(v)\rangle \quad \text{and}
\quad (\alpha,\beta)_{\Omega^1}=\frac{1}{2}\sum_{e\in \EE} \langle \alpha(e), \beta(e)\rangle
\]
for~$f,g\in \Omega^0(\sG,W)$ and~$\alpha,\beta\in \Omega^1(\sG,W)$.
Next, we define twisted coboundary and boundary maps
\[
\delta_\Phi\colon \Omega^0(\sG,W)\to\Omega^1(\sG,W)\quad\text{and}\quad\partial_\Phi\colon \Omega^1(\sG,W)\to\Omega^0(\sG,W)
\]
by
\[
\delta_\Phi f(e)=\varphi_{e,t(e)} f\left(t(e)\right)-\varphi_{s(e),e}^{-1} f\left(s(e)\right)  \quad \text{and} \quad \partial_\Phi \alpha(v)=-\sum_{e\in \EE_v} \varphi_{s(e),e} \,\alpha(e)
\]
for~$f\in \Omega^0(\sG,W),\alpha\in \Omega^1(\sG,W),v\in \V$ and~$e\in \EE$.
Since the automorphisms~$\varphi_{s(e),e}$ and~$\varphi_{e,t(e)}$ are unitary, one can now check that
the maps~$\delta_\Phi$ and $\partial_\Phi$ are adjoint of one another. In other words,
for all~$f\in \Omega^0(\sG,W)$ and~$\alpha\in \Omega^1(\sG,W)$, there is an equality between
\[
(f,\partial_\Phi\alpha)_{\Omega^0}=\sum_{v\in \V} \langle f(v),\partial_\Phi\alpha(v)\rangle
=-\sum_{v\in \V}\sum_{e\in\EE_v}\langle f(v),\varphi_{s(e),e}\alpha(e)\rangle=-\sum_{e\in\EE}\langle f(s(e)),\varphi_{s(e),e}\alpha(e)\rangle
\]
and
\begin{align*}
(\delta_\Phi f,\alpha)_{\Omega^1}&=\frac{1}{2}\sum_{e\in\EE}\langle\delta_\Phi f(e),\alpha(e)\rangle
=\frac{1}{2}\sum_{e\in\EE}\left(\langle\varphi_{e,t(e)}f(t(e)),\alpha(e)\rangle-\langle\varphi_{s(e),e}^{-1}f(s(e)),\alpha(e)\rangle\right)\\
	&=\frac{1}{2}\sum_{e\in\EE}\left(-\langle f(s(\overline{e})),\varphi_{s(\overline{e}),\overline{e}}\alpha(\overline{e})\rangle-\langle f(s(e)),\varphi_{s(e),e}\alpha(e)\rangle\right)\\
	&=-\sum_{e\in\EE}\langle f(s(e)),\varphi_{s(e),e}\alpha(e)\rangle\,.
\end{align*}

Finally, we let~$\diag_{\sf x}$ denote the block-scalar map on~$\Omega^1(\sG,W)$ equal to~${\sx}_e$ on the block~$W$ indexed by the edge~$e$. For each~$f\in \Omega^0(\sG,W)$ and~$v\in \V$, we now have
\begin{align*}
(\partial_\Phi \circ \diag_{\sx} \circ \delta_\Phi)(f)(v)&=\sum_{e\in \EE_v}\sx_e\varphi_{s(e),e}\left(\varphi_{s(e),e}^{-1} f\left(s(e)\right)- \varphi_{e,t(e)} f\left(t(e)\right)\right)\\
	&=\sum_{e\in \EE_v} {\sx}_{e} \left[f\left(s(e)\right)-\varphi_ef\left(t(e)\right)\right]=\Delta_{\sG}^\Phi(f)(v)\,.
\end{align*}
In view of this factorisation~$\Delta_{\sG}^\Phi=\partial_\Phi \circ \diag_{\sx} \circ \delta_\Phi$,
we can now compute $\det \Delta_\sG^\rho=\det \Delta_\sG^\Phi$ by an application of the Cauchy--Binet formula. For this purpose, we identify linear maps with matrices in orthonormal bases of~$\Omega^0(\sG,W)$ and~$\Omega^1(\sG,W)$ induced by concatenating copies of a given orthonormal basis of~$W$. 

For a matrix~$M$, we will denote by~$M^I_J$ the sub-matrix of~$M$ indexed by rows~$I$ and columns~$J$, and we omit~$I$ or~$J$ in this notation when it is equal to the full set of indices. 
With a slight abuse of notation, we denote by $\delta$ the matrix of $\delta_\Phi$ and by $\partial$ the matrix of $\partial_\Phi$ with respect to the above chosen orthonormal bases. 
Setting~$d=\dim W$,
the Cauchy--Binet formula yields
\[
\det \Delta_\sG^\Phi=\sum_{\substack{J\subset \E\times \{1,\ldots, d\}\\ \vert J\vert=d \vert \V\vert}} \det \partial_J\,\det (\diag_{\sf x})_J ^J\,\det \delta^J \,.
\]
Since~$\det (\diag_{\sf x})^J_J=\prod_{\se\in\E}\sx_\se^{n_\se(J)}$ with~$n_\se(J)=\vert J\cap\{\se\}\times \{1,\ldots, d\}\vert$, we find that  the coefficient of an arbitrary monomial~$\prod_{\se\in\E}\sx_{\se}^{n_\se}$
in the above expansion is equal to 
\begin{equation}\label{eq:cauchy-binet}
\sum_{\substack{J\subset\E\times\{1,\ldots, d\},\;\vert J\vert=d \vert \V\vert\\ n_\se(J)=n_\se, \forall \se\in \E}} \\ \det \partial_J\,\det\delta^J
\;=\sum_{\substack{J\subset\E\times\{1,\ldots, d\},\;\vert J\vert=d \vert \V\vert\\ n_\se(J)=n_\se, \forall \se\in \E}} \left\vert \det \partial_J \right\vert^2\,,
\end{equation}
using the fact that the maps~$\delta_\Phi$ and~$\partial_\Phi$ are adjoint.
This shows that this coefficient is non-negative, and concludes the proof of the first assertion.

\smallskip

Let us now prove the second one. Let $(n_\se)_{\se\in \E}$ be a tuple of non-negative integers which corresponds to one of the monomials of our polynomial~$\det \Delta_\sG^\Phi$. Since the coefficient of this monomial is given by the right-hand side of \eqref{eq:cauchy-binet} and is non-zero, there exists $J\subset\E\times\{1,\ldots, d\}$ satisfying $\vert J\vert=d \vert \V\vert$ and $n_\se(J)=n_\se, \forall \se\in \E$, such that
\begin{equation}\label{eq:one-term}
\left\vert \det \partial_J \right\vert^2\ne 0\,.
\end{equation}
Let us write $\partial=(\partial^{i}_{j})_{i\in \V\times\{1,\ldots, d\},j\in \E\times\{1,\ldots, d\}}$. By~\eqref{eq:one-term}, and using the classical expansion of the determinant as a sum over permutations, there exists a bijection $\sigma:\V\times\{1,\ldots, d\}\to J$ such that 
\begin{equation}\label{eq:prod-non-zero}
\prod_{i\in \V\times\{1,\ldots, d\}} \partial^i_{\sigma(i)}\ne 0\,.
\end{equation}
Fix~$k\in \{1,\ldots, d\}$. For each $v\in \V$, set $i=(v,k)$ and let $\se\in \E$ and $\ell\in \{1,\ldots, d\}$ be defined by $\sigma(i)=(\se,\ell)$. Since $\partial^i_{\sigma(i)}\ne 0$ by \eqref{eq:prod-non-zero}, we see that~$v$ must be one of the boundary-vertices of~$\se$. Let $e\in \EE_v$ be the oriented version of~$\se$ for which $v=s(e)$ and set~$\gamma_k(v)=e$. (If~$\se$ is a self-loop, simply pick an arbitrary orientation.)
By definition, the collection $\left(\gamma_k(v)\right)_{v\in \V}\in \prod_{v\in \V} \EE_v$ defines an element $\gamma_k$ of~$\mathcal{V}(\sG)$. Furthermore, by construction and since $\sigma$ is a bijection, for each edge $\se\in \E$, whose oriented versions we denote by $\{e,\overline{e}\}$, we have
\begin{align*}
n_\se&= n_\se(J)=\Big\vert \sigma^{-1}\left(\{\se\}\times\{1,\ldots, d\}\right)\Big\vert\\
&=\sum_{k=1}^d \sum_{v\in \V} \mathbbm{1}_{\{\sigma(v,k)\in\{\se\}\times\{1,\ldots, d\}\}}
=\sum_{k=1}^d \left(\mathbbm{1}_{\{e\in\gamma_k\}}+\mathbbm{1}_{\{\overline{e}\in \gamma_k\}}\right)\,.
\end{align*}
This concludes the proof. 
\end{proof}

We note that the assertion proved in the second part of the proof of Theorem~\ref{thm:addendum} can also be obtained as an easy corollary of~\cite[Theorems 5.1]{KL2}. A slightly stronger version of that assertion appears in~\cite{KL6} as well.

As a final remark, note that our method of proof adapts partly to the setup of digraphs. However, it fails, without further input, to prove the full Conjecture~5.5 of~\cite{CDHGZZ}. Indeed, our argument to show the non-negativity of~$f$ relies on a symmetry property which does not hold for general digraphs.

\end{document}